\theoremstyle{plain}
\newtheorem{Theo}{Theorem}[subsection]
\newtheorem*{Theo*}{Theorem}
\newtheorem{Defi}[Theo]{Definition}
\newtheorem{Lemm}[Theo]{Lemma}
\newtheorem{Prop}[Theo]{Proposition}
\newtheorem{Coro}[Theo]{Corollary}
\newtheorem{Conj}[Theo]{Conjecture}
\theoremstyle{remark}
\newtheorem{Rema}[Theo]{Remark}
\newcommand{\Qp}{{\mathbf{Q}_p}}
\newcommand{\Zp}{{\mathbf{Z}_p}}
\newcommand{\Z}{\mathbf{Z}}
\newcommand{\Q}{\mathbf{Q}}
\newcommand{\dist}{{\calR^+_L(\Gamma)}}
\renewcommand{\phi}{\varphi}
\renewcommand{\geq}{\geqslant}
\renewcommand{\leq}{\leqslant} 
\newcommand{\calR}{\mathcal{R}}
\newcommand{\Gal}{\mathrm{Gal}}
\newcommand{\Hom}{\mathrm{Hom}}
\newcommand{\GL}{\mathrm{GL}}
\newcommand{\cris}{{\operatorname{cris}}}
\newcommand{\dR}{{\operatorname{dR}}}
\newcommand{\rig}{{\operatorname{rig}}}
\newcommand{\dif}{{\operatorname{dif}}}
\newcommand{\Iw}{{\operatorname{{Iw}}}}
\newcommand{\tors}{{\operatorname{tors}}}
\newcommand{\free}{{\operatorname{free}}}
\newcommand{\Hpg}{\mathrm{H}_{\phi, \gamma}}
\newcommand{\Hsg}{\mathrm{H}_{\psi, \gamma}}
\newcommand{\Hpsi}{\mathrm{H}_{\psi}}
\newcommand{\Hg}{\mathrm{H}_{\gamma}}
\newcommand{\pg}{(\phi, \Gamma)}
\newcommand{\dcris}{\mathbf{D}_{\cris}}
\newcommand{\ddr}{\mathbf{D}_{\dR}}
\newcommand{\ddif}{\mathbf{D}_{\dif}}
\newcommand{\ddifn}{\mathbf{D}_{\dif,n}}
\newcommand{\ddifm}{\mathbf{D}_{\dif,m}}
\newcommand{\nrig}{\mathbf{N}_{\rig}}
\newcommand{\drig}[1]{\mathbf{D}^{\dagger #1}_{\mathrm{rig}}}
\newcommand{\Exp}{\mathrm{Exp}}
\newcommand{\id}{{\mathrm{id}}}
\newcommand{\can}{{\mathrm{can}}}
\newcommand{\isom}{\xrightarrow{\sim}}
\renewcommand{\ev}{{\mathrm{ev}}}
\newcommand{\incl}{{\mathrm{incl}}}
\newcommand{\chara}{{\mathrm{char}}}
\newcommand{\Det}{\mathrm{Det}}
\renewcommand{\det}{\mathrm{det}}
\author{Tetsuya Ishida and Kentaro Nakamura}
\title[Local $\varepsilon$-conjecture and $p$-adic differential equations]
{Local $\varepsilon$-conjecture and $p$-adic differential equations}
\subjclass{}
\keywords{$(\phi,\Gamma)$-module, $p$-adic Hodge theory}
\begin{document}

\begin{abstract}
Laurent Berger attached a $p$-adic differential equation $\nrig(M)$ with a Frobenius structure to an arbitrary de Rham $\pg$-module $M$ over a Robba ring. 
In this article, we compare the local epsilon conjecture for the cyclotomic deformation of $M$ with that of $\nrig(M)$. 
We first define an isomorphism between the fundamental lines of their cyclotomic deformations using the second author's results on the big exponential map. 
As a main result of the article, we show that this isomorphism enables us to reduce the local epsilon conjecture for the cyclotomic deformation of $M$ to that of $\nrig(M)$.

\end{abstract}

\maketitle

\tableofcontents

\setlength{\baselineskip}{18pt}

\section{Introduction/Notation}
\label{1}

In \cite{Kat93a}, Kato formulated a conjecture called the generalized Iwasawa main conjecture, which is a vast generalization of the Iwasawa main conjecture and Bloch-Kato conjecture.
It claims the existence of so-called zeta isomorphisms for any family of $p$-adic Galois representations of $G_\Q$, interpolating the zeta elements of geometric $p$-adic Galois representations.
Note that a similar conjecture was formulated by Fontaine and Perrin-Riou in \cite{FP94}.
Since the zeta elements are conjectural bases in (the determinants of) the Galois cohomologies and closely related to the $L$-functions, it is natural to regard the zeta isomorphisms as algebraic counterparts of the $L$-functions.
In \cite{Kat93b} and \cite{FK06}, Kato's local and global $\varepsilon$-conjectures are formulated as algebraic analogue of the functional equations of $L$-functions; the local $\varepsilon$-conjecture claims the existence of the local $\varepsilon$-isomorphisms, the algebraic analogue of local $\varepsilon$-factors for families of $p$-adic representations of $G_{\Q_l}$, and the global $\varepsilon$-conjecture states that the zeta isomorphisms satisfies the functional equations whose local factors are the local $\varepsilon$-isomorphisms.

The local $\varepsilon$-conjecture for $l \neq p$ is proved \cite{Yas09}, \cite{Kak14}.
But for the case $l = p$, which we treat in this paper, the existence of the local $\varepsilon$-isomorphisms are proved for limited families and the conjecture is still open.
In particular, by generalizing the conjecture for $\pg$-modules over relative Robba rings, the second author proves the existence of $\varepsilon$-isomorphisms for trianguline representations.
The conjecture has turned out to be closely related to the Coleman isomorphisms \cite{Kat93b} \cite{Ven13}, the Perrin-Riou maps \cite{BB08} \cite{LVZ13}, and also the $p$-adic local Langlands correspondence \cite{Nak17b} \cite{RJ18}.

Our main theorem compares the local $\varepsilon$-isomorphisms of the following different objects.
Let $M$ be an arbitrary de Rham $\pg$-module over a Robba ring.
The first object is the cyclotomic deformation of $M$.
The second one is the cyclotomic deformation of $\nrig(M)$, where $\nrig (M)$ is the $p$-adic differential equation attached to $M$ by Laurent Berger.
We remark that the existences of their local $\varepsilon$-isomorphisms are still conjectural.
The main theorem claims that the difference of their local $\varepsilon$-isomorphisms is written as the generalized Perrin-Riou map defined by the second author in \cite{Nak14}.

To make the statement of the main theorem more precise, we recall $\pg$-modules over Robba rings and the local $\varepsilon$-conjecture for them.

A $\pg$-module $D$ is a module equipped with a suitable endomorphism $\phi: D \to D$ and a continuous group action of $\Gamma=\Gal(\Qp(\mu_{p^\infty})/\Qp)$, where $\mu_{p^\infty}$ is the group of $p$-power roots of unity in $\overline{\Q}_p$.
There are several specific rings over which $\pg$-modules are useful to study $p$-adic representations.
An important case is the Robba rings $\calR_L$ with their coefficients in local fields $L$; by results of Fontaine \cite{Fon90}, Cherbonnier and Colmez \cite{CC99} and Kedlaya \cite{Ked08}, the category of $p$-adic representations over $L$ can be embedded fully and faithfully into the one of $\pg$-modules over $\calR_L$.
A lot of important notions of $p$-adic Hodge theory can be generalized to $\pg$-modules over $\calR_L$, such as the functors $\dcris$ and $\ddr$ \cite{Ber02}, or Bloch-Kato's exponential maps \cite{Ber03}, \cite{Nak14}.
Another important feature is that, when a $\pg$-module $M$ is de Rham, Berger attached to $M$ a $p$-adic differential equation $\nrig(M)$ with Frobenius structure; as its application, one can prove the $p$-adic monodromy theorem for $p$-adic representations by reducing it to that for $p$-adic differential equations, or Colmez-Fontaine's theorem \cite{Ber02}, \cite{Ber08}.

In \cite{Nak17a}, the second author formulated the local $\varepsilon$-conjecture for $\pg$-modules over relative Robba rings, generalizing the Kato's conjecture for $p$-adic representations.
We recall only the conjecture for the cyclotomic deformations of de Rham $\pg$-modules, since it is the case we treat in this paper.
Let $L$ be a finite extension of $\Qp$, and $M$ be a $\pg$-module over the Robba ring $\calR_L$ with coefficients in $L$.
Then, one can attach to $M$ a (graded) invertible module $\Delta_L(M)$ over $L$ and $\Delta^\Iw_L(M)$ over $\dist$ for a $\pg$-module $M$ over $\calR_L$, where we put $\calR^+_L(\Gamma) = \Gamma(\mathcal{W},\mathcal{O}_\mathcal{W})$ and $\mathcal{W}$ the Berthelot generic fiber of the Iwasawa algebra $\mathcal{O}_L[[\Gamma]]$.
When $M$ is de Rham, he constructed a canonical trivialization isomorphism
\[\varepsilon_L^{\dR}(M): L \xrightarrow{\sim} \Delta_L(M). \]
Its definition involves a lot of notions of $p$-adic Hodge theory, such as the theory of local constants ($\varepsilon$-constants and $L$-constants), Bloch-Kato's exponential and dual exponential maps, Hodge-Tate weights.
Then the local $\varepsilon$-conjecture in this situation claims that, there exists a unique isomorphism 
\[ \varepsilon^{\Iw}_L(M): \dist \xrightarrow{\sim} \Delta^\Iw_L(M)\]
interpolating $\varepsilon^\dR_L(M(\delta))$ for any de Rham character $\delta: \Gamma \to L^\times$, i.e.\ any character of the form $\delta = \chi^k \tilde{\delta}$ for $k \in \Z$ and a finite character $\tilde{\delta}$, where $\chi$ is the cyclotomic character.
More precisely, $\varepsilon^{\Iw}_L(M)$ is required to make the following diagram
\[
\xymatrixcolsep{4pc}
\xymatrix{
\Delta^\Iw_L(M) \otimes_{f_\delta} L\ar[r]^-{\ev_\delta}  \ar[d]_-{\varepsilon^\Iw_L(M) \otimes \id}  & \Delta_L(M(\delta)) \ar@{->}[d]^-{\varepsilon^{\mathrm{dR}}_L(M(\delta))} \\ 
\dist \otimes_{f_\delta} L \ar[r]_-{\can} & L
}
\]
commute for any de Rham character $\delta$ of $\Gamma$, where $f_\delta: \dist \to \dist$ is a continuous homomorphism of $L$-algebras given by $[g] \mapsto \delta(g)^{-1}$ and $\ev_\delta$ is a canonical isomorphism induced by the specialization a $f_\delta$.
In the original article of Kato \cite{Kat93b}, he predicts the conjectural base $\varepsilon^\Iw_{\mathcal{O}_L}(T)$ of an invertible $\mathcal{O}_L[[\Gamma]]$-module $\Delta^\Iw_{\mathcal{O}_L}(T)$ similarly defined for any $\mathcal{O}_L$-representation $T$ of $G_\Qp$.
In \cite{Nak17a}, the second author predicts the equality $ \varepsilon^\Iw_{\mathcal{O}_L}(T) \otimes \id = \varepsilon^\Iw_{L}(\drig{}(T[1/p]))$, that is, the right hand side has an integral structure in the \'etale case.

The following is the main theorem of this paper, which can be regarded as an extension of the studies in \cite{Nak14} and \cite{Nak17a}.
It roughly states that, for a general de Rham $\pg$-module $M$ over $\calR_L$ and the $p$-adic differential equation $\nrig(M)$ attached to $M$, the differences of $\varepsilon_L(M(\delta))$ and $\varepsilon_L(\nrig(M)(\delta))$ for the de Rham characters $\delta$ of $\Gamma$ are interpolated by the generalized Perrin-Riou map in \cite{Nak14}.

\begin{Theo*}
There exists an $\dist$-linear isomorphism
\[ \Exp(M): \Delta^\Iw_L(\nrig(M)) \xrightarrow{\sim} \Delta^\Iw_L(M) \]
whose specialization at any de Rham character $\delta$ of $\Gamma$ makes the following diagram
\[
\xymatrix{
& 1_L \ar[dl]_-{\varepsilon^{\dR}_L(\nrig(M)(\delta))}\ar[dr]^-{\varepsilon^{\dR}_L(M(\delta))}\ar@{}[d]|{} & \\
\Delta_{L}(\nrig(M)(\delta)) \ar[rr]_-{ \Exp(M)_\delta}   && \Delta_L(M(\delta))\\
}
\]
commute, where the isomorphism $\Exp(M)_\delta$ is defined by the following commutative diagram
\[
\xymatrix{
\Delta_{L}(\nrig(M)(\delta)) \ar[rr]^-{ \Exp(M)_\delta } &&  \Delta_{L}(M(\delta))\\
\ar[u]^-{\ev_\delta}  \Delta^\Iw_{L}(\nrig(M)) \otimes_{f_\delta} L \ar[rr]_-{\Exp(M) \otimes \id } && \Delta^\Iw_{L}(M) \otimes_{f_\delta} L \ar[u]_-{\ev_\delta}. \\
}
\]

In particular, if $\varepsilon^{\Iw}_L(\nrig(M))$ exists, then $\varepsilon^{\Iw}_L(M)$ also exists and is written as 
\[ \varepsilon^\Iw_L(M) = \Exp(M) \circ \varepsilon^{\Iw}_L(\nrig(M)). \]
\end{Theo*}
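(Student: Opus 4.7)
The plan is to construct $\Exp(M)$ from the generalized Perrin-Riou map of \cite{Nak14} and then verify the de Rham compatibility at each character $\delta$ by direct computation, finally deducing the ``In particular'' assertion from the interpolation property characterizing $\varepsilon^{\Iw}_L$.

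First I would construct the isomorphism $\Exp(M)$ on fundamental lines. The line $\Delta^\Iw_L(?)$ for $?$ equal to $M$ or $\nrig(M)$ is built from the determinant of the Iwasawa $\pg$-cohomology complex together with the rank datum. Since $M$ is de Rham, one has the canonical inclusion $\nrig(M) \hookrightarrow M[1/t]$, and the difference between the two $\pg$-cohomology complexes is controlled by the de Rham/differential equation data of $M$. The generalized Perrin-Riou map of \cite{Nak14} provides canonical trivializations of these pieces over $\dist$; passing to determinants yields $\Exp(M)$ with the required $\dist$-linearity.

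Next, for each de Rham character $\delta = \chi^k \tilde{\delta}$, I would match $\Exp(M)_\delta$ with the quotient $\varepsilon^{\dR}_L(M(\delta)) \circ \varepsilon^{\dR}_L(\nrig(M)(\delta))^{-1}$. The right-hand side decomposes, by definition of $\varepsilon^{\dR}$, into Bloch-Kato exponentials and dual exponentials, $\varepsilon$- and $L$-constants, and $\Gamma$-factors coming from Hodge-Tate weights. The key input is the interpolation property of the big exponential map: its value at $\delta$ equals the Bloch-Kato exponential of $M(\delta)$ up to an explicit $\Gamma$-factor depending on the Hodge-Tate weights, and analogously for $\nrig(M)(\delta)$. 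Since $\nrig(M)$ has all Hodge-Tate weights zero and shares the underlying filtered $(\phi,N)$-module of $M$ (only the Hodge filtration differs), the discrepancy in $\varepsilon$- and $L$-constants cancels between $M(\delta)$ and $\nrig(M)(\delta)$, and what survives is exactly $\Exp(M)_\delta$.

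Finally, for the ``In particular'' part, assume $\varepsilon^{\Iw}_L(\nrig(M))$ exists and set $\varepsilon^{\Iw}_L(M) := \Exp(M) \circ \varepsilon^{\Iw}_L(\nrig(M))$. Its specialization at any de Rham $\delta$ is $\Exp(M)_\delta \circ \varepsilon^{\dR}_L(\nrig(M)(\delta))$, which equals $\varepsilon^{\dR}_L(M(\delta))$ by the commutative triangle; this is the interpolation property characterizing $\varepsilon^{\Iw}_L(M)$, and uniqueness (from density of de Rham characters in the rigid spectrum of $\dist$) forces the displayed equality. The hardest step will be the second one: the de Rham trivialization $\varepsilon^{\dR}$ bundles together essentially every ingredient of local $p$-adic Hodge theory, and the precise correction factor produced by the big exponential map involves subtle compensations between $\varepsilon$-constants, $L$-constants, $\Gamma$-factors and Hodge filtrations. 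Making these cancellations explicit and checking that the remainder is exactly $\Exp(M)_\delta$ is the technical core of the argument, and will likely require a careful analysis according to whether the Hodge-Tate weights of $M(\delta)$ produce poles or zeroes in the relevant $\Gamma$-factors.
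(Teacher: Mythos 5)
Your high-level strategy for the ``In particular'' step is exactly right and matches the paper (Corollary \ref{main'}): density of de Rham characters in $\mathcal W$ plus the interpolation triangle forces the equality. Your intuition about $\varepsilon$-constant cancellation is also correct — the paper uses precisely this observation in Lemma \ref{Second half: Exp_2 and theta_dR}, since $M$ and $\nrig(M)$ share the same filtered $(\phi,N,G_{\Qp})$-module and hence the same Weil–Deligne representation.

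However, there are two genuine gaps in the technical core of your plan. First, your construction of $\Exp(M)$ by ``passing to determinants'' glosses over the fact that the big exponential map $\Exp_{(h)}(M) = \times\prod_{i=0}^{h-1}\nabla_i$ has a nontrivial cokernel and the induced map on the free quotients of $\Hpsi^1$ must be corrected by a factor of the form $\prod_i\prod_j \nabla_{h_i+j}$ in $Q(\dist)$ before it even has a chance to descend to an integral isomorphism of $\Det_\dist$-lines. Showing that the corrected map preserves the integral lattice is nontrivial and is the content of the $\delta(D)$-theorem of \cite{Nak14} (quoted as Theorem \ref{delta(D)}); without invoking it, $\Exp_1(M)$ is not even well-defined as an $\dist$-linear isomorphism.

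Second, and more seriously, the ``key input'' you cite — the interpolation property of the big exponential map, relating its specialization at $\delta$ to $\exp_{M(\delta)}$ up to a $\Gamma$-factor — is not available off the shelf for all de Rham characters $\delta$. The known interpolation formulas for Perrin-Riou maps hold only at twists where the relevant exponential (or dual exponential) is an isomorphism, i.e.\ at sufficiently positive or negative Hodge–Tate weights; the paper's Theorem \ref{main} \emph{is} the refined interpolation formula, so attempting to cite it as a known ingredient is circular. The missing piece is the key lemma (Theorem \ref{keylemma}): for any distribution $\lambda$ with $\lambda(\ddifm^+(D))\subseteq\ddifm^+(D')$, multiplication by $\lambda$ is compatible with $\exp$, with $\exp^*$, and — crucially when $\lambda(\mathbf 1)=0$ — sends $\exp^*_D$ to $\exp_{D'}$ twisted by $\frac{d\lambda}{\omega}(\mathbf 1)$, which handles the exceptional case $h_1=0$ via the $\exp_f$-direction. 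Without this lemma, a ``direct computation'' at a non-generic character $\delta$ must handle the full four-term Bloch–Kato sequences (including $\Hpg^0$, $\Hpg^2$, $\dcris$, $\dcris(M^*)^\vee$), and your proposal gives no strategy for taming them. The paper avoids this by (i) proving the rank-1 case via the explicit construction of $\varepsilon^\Iw$ in \cite{Nak17a}, (ii) reducing the general case to the \emph{generic} case (where $\Hpg^i=0$ for $i\neq 1$ and $\dcris=0$) by induction and multiplicativity in exact sequences (Propositions \ref{exactness of Exp} and \ref{reducetogeneric}), and (iii) in the generic case, further reducing to one-step Hodge–Tate increments $M\to M^+$ at the trivial character, where the three cases $h_1<0$, $h_1>0$, $h_1=0$ correspond exactly to the three parts of the key lemma. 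Your plan contains no substitute for this reduction scheme, and a truly ``direct'' comparison would effectively require re-proving the key lemma anyway.
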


We remark that our theorem can be regarded as a refined interpolation formula for Bloch-Kato morphisms.
The isomorphism $\Exp(M)$ is obtained by the generalized Perrin-Riou's big exponential map
\[ \Exp_{M,h}: \Hpsi^1(\nrig(M)) \to \Hpsi^1(M) \] 
of \cite{Nak14} for de Rham $\pg$-module $M$, in conjunction with one of the main results, theorem $\delta(D)$.
The big exponential maps are first introduced by Perrin-Riou \cite{Per94} for crystalline representations and used essentially in her study of $p$-adic $L$-functions, and then generalized to de Rham representations \cite{Col98} and to de Rham $\pg$-modules \cite{Nak14}.
Their key feature is that they interpolate the Bloch-Kato's morphisms of twists $\exp_{M(\chi^k \tilde{\delta})}$ and $\exp^*_{M(\chi^k \tilde{\delta})}$ for suitable $k \in \Z$.
The theorem can be seen as a refinement of such interpolation formulae; our big exponential map $\Exp(M)$ interpolates, at any twists $\delta = \chi^k \tilde{\delta}$ for any $k \in \Z$ and $\tilde{\delta}$, not only the maps $\exp_{M(\delta)} $and $\exp^*_{M(\delta)}$ but also another exponential map $\exp_{f,M(\delta)}: \dcris(M(\delta)) \to \Hpg^1(M(\delta))$, which is closely related with the exceptional zeros for $p$-adic $L$-functions.
We note that, even when $M$ comes from a crystalline $p$-adic representation, the map $\exp_{f,M(\delta)}$ is non-zero in general and we can obtain its information via our refined formula.

We also remark a relation of our theorem to the local $\varepsilon$-conjecture itself.
The local $\varepsilon$-conjecture for the cyclotomic deformation of a general de Rham $\pg$-module is not proved yet, and only the following special cases are proved.
\begin{itemize}
	\item The case of rank $1$ Galois representations (i.e.\ rank $1$ \'etale $\pg$-modules) is proved by Kato in \cite{Kat93b} (proofs taking account of signs is given in \cite{FK06} briefly and in \cite{Ven13} in detail.)
	\item The case of crystalline representations is proved by Benois and Berger in \cite{BB08}, which is generalized by Loeffler, Venjakob, and Zerbes in \cite{LVZ13}, and by Bellovin and Venjakob in \cite{BV19}.
	\item The case of trianguline $\pg$-modules over relative Robba rings, including all semi-stable representations and also the representations associated to finite slope overconvergent modular forms, is proved by the second author in \cite{Nak17a}.
	\item The case of rank $2$ Galois representations is proved by the second author in \cite{Nak17b} in almost all cases and completed by Rodrigues Jacinto \cite{RJ18}, by showing its close relation to the $p$-adic local Langlands conjecture for $\GL_2(\Qp)$.
\end{itemize}
By the last assertion of the theorem, we can reduce the local $\varepsilon$-conjecture for the cyclotomic deformation of arbitrary de Rham $\pg$-module $M$ to that of $\nrig(M)$.
This reduction seems a useful approach, since $\nrig(M)$ is relatively simple (all of its Hodge-Tate weights are zero) and also has an additional structure of a $p$-adic differential equation with a Frobenius structure so that we can utilize the theory of $p$-adic differential equations.
We note that such a reduction is implicitly used to prove the trianguline case, and this theorem is stated as a conjecture \cite[Remark 4.15]{Nak17a}; see also Remark \ref{rank 1 and conjecture}.

The structure of the paper is as follows.
In section \ref{2}, we recall definitions about $\pg$-modules over Robba rings and prove the key lemma Lemma \ref{keylemma} on a relation of Bloch-Kato's morphisms and distributions.
In section \ref{3}, we recall (a special case of) the local $\varepsilon$-conjecture for $\pg$-modules studied in \cite{Nak17a}, introduce the $p$-adic differential equation $\nrig(M)$ for a de Rham $\pg$-module $M$, and construct our big exponential map $\Exp(M): \Delta^\Iw_L(\nrig(M)) \xrightarrow{\sim} \Delta^\Iw_L(M)$; it is induced by distribution, and the construction depends heavily on \cite{Nak14}.
In section \ref{4}, we state our main theorem and prove it, by introducing the notion of genericity, deducing the proof of the general case to the case of generic, and proving the generic case by applying the key lemma.

\textit{Notation.}
Let $p$ be a prime number.
We fix the algebraic closure $\overline{\Q}_p$ of the $p$-adic number field $\Q_p$.
Let $L$ be a finite extension of $\Q_p$.
Let $\mu_{p^\infty}$ denote the group of $p$-power roots of unity in $\overline{\Q}_p$.
We fix primitive $p^n$-th roots of unity $\zeta_{p^n} \in \mu_{p^\infty}$ such that $\zeta^p_{p^{n+1}} = \zeta_{p^n}$ for any $n \geq \Z_{\geq 1}.$
The set $\Gamma=\mathrm{Gal}(\Q_p(\mu_{p^\infty})/\Qp)$.
Let $\Delta \subseteq \Gamma$ be the $p$-torsion subgroup of $\Gamma$ and put $p_\Delta = \frac{1}{|\Delta|} \sum_{\sigma \in \Delta} \sigma$.
We fix an element $\gamma \in \Gamma$ whose image in $\Gamma/\Delta$ is a topological generator.
The cyclotomic character on $\Gamma$ is denoted by $\chi: \Gamma \xrightarrow{\sim} \Z_p^\times$, which is characterized by $\gamma(\zeta) = \zeta^{\chi(\gamma)}$ for all $\zeta \in \mu_{p^\infty}$ and $\gamma \in \Gamma$.
For a ring $R$, the objects of the category of graded invertible $R$-modules are written as the pairs $(\mathcal{L},r)$ of an invertible $R$-module $\mathcal{L}$ and a continuous function $r: \mathrm{Spec}(R) \to \Z $, and the product $\boxtimes$ is defined by 
$(\mathcal{L}_1, r_1)\boxtimes(\mathcal{L}_2, r_2):=(\mathcal{L}_1\otimes_R\mathcal{L}_2, r_1+r_2)$.
We put $1_R \coloneqq (R,0)$.

\section{Review of the theory of $\pg$-modules over Robba rings}
\label{2}

In this section, we first recall the definition of $\pg$-modules over Robba rings, their cohomologies, and some notions of $p$-adic Hodge theory.
Then, we study several kinds of morphisms defined by a distribution.
Theorem \ref{keylemma} is the key result, which describes a relation between such morphisms and Bloch-Kato's morphisms.

\subsection{$\pg$-modules over Robba rings}
\label{21}

For each integer $n \in \Z_{\geq 1}$, put 
\[ \calR_L^{(n)} = \set{ \sum_{i\in \Z} a_i T^i | a_i \in L, \sum_{i\in \Z} a_i T^i \text{ is convergent on } |\zeta_{p^n} -1| \leq |T| <1 }.\] 
We put $\calR_L = \cup_{n \geq 1} \calR_L^{(n)}$, with which we can equip a canonical LF-topology and we call the Robba ring over $L$.
Put $t = \log(1+T) \in \calR_L$.
There is an operator $\phi: \calR_L \to \calR_L$ and a group action of $\Gamma$ on $\calR_L$, both of which are continuous and linear over $L$ satisfying
\[ \phi(T) = (1+T)^p -1, \, \gamma(T) = (1+T)^{\chi(\gamma)} -1 \]
for any $\gamma \in \Gamma$.
A tuple $((1+T)^i)_{i=0, \dots, p-1}$ is a basis of $\calR_L$ over $\phi(\calR_L)$, and we can define a map $\psi:\calR_L \to \calR_L$ by 
\[ \psi \qty(\sum_{i=0}^{p-1} \phi(f_i) (1+T)^i) = f_0 \]
for $f_i \in \calR_L$.
Then $\psi$-operator turns out to be continuous and commutes with $\Gamma$.

For each $n \in \Z_{\geq 1}$, set $L_n = \Qp(\zeta_{p^n}) \otimes_\Qp L$.
Then one has a continuous $\Gamma$-equivariant homomorphism
\[\iota_n: \calR_L^{(n)} \to L_n[[t]] \]
of $L$-algebras such that
\[ \iota_n(T) = \zeta_{p^n} \exp \qty( \frac{t}{p^n}) -1, \]
which satisfies the following commutative diagram
\[
\xymatrixcolsep{4pc}
\xymatrix{
\calR^{(n)}_L \ar[r]^-{\iota_n}  \ar@{->}[d]_-{\phi}  & L_n[[t]] \ar@{->}[d]^-{\incl} \\
\calR^{(n+1)}_L \ar[r]_-{\iota_{n+1}} & L_{n+1}[[t]].
}
\]
\begin{Defi}
A $\pg$-module over $\calR_L$ is a free $\calR_L$-module $D$ of finite rank equipped with a semilinear endomorphism $\phi: D \to D$ over satisfying $\phi^* D = D$ and a continuous $\Gamma$-action commuting to $\phi$.
\end{Defi}

The following lemma is Theorem 1.3.3 of \cite{Ber08}.
\begin{Lemm}\label{increasing sequence of phi-module}
Let $D$ be a $\pg$-module over $\calR_L$.
Then, there exists an integer $n \geq 1$ such that there exists a unique $\Gamma$-stable $\calR^{(m)}_L$-submodule $D^{(m)} \subseteq D$ for each $m \geq n$ such that for any $m \geq n$ we have $\calR^{m+1}_L \otimes_{\calR^m_L,\phi} D^{(m)} = D^{(m+1)}$ and $\calR_L \otimes_{\calR^m_L} D^{(m)} = D$.
\end{Lemm}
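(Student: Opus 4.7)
The plan is to follow Berger's original strategy and produce $D^{(m)}$ explicitly from a choice of basis, then deduce uniqueness from the étale condition $\phi^{*}D = D$.

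First, I pick an $\calR_L$-basis $e_1, \ldots, e_d$ of $D$. Write the matrix of $\phi$ in this basis as $P \in \mathrm{GL}_d(\calR_L)$; the étale hypothesis $\phi^{*}D = D$ is exactly the statement that $P$ is invertible over $\calR_L$. Since $\calR_L = \bigcup_{n \geq 1} \calR_L^{(n)}$ and since the $\Gamma$-action on $D \cong \calR_L^d$ is continuous for the LF-topology, one can choose $n \geq 1$ large enough that (i) both $P$ and $P^{-1}$ have entries in $\calR_L^{(n)}$, and (ii) the matrices representing a topological generator of $\Gamma/\Delta$ and the (finitely many) generators of $\Delta$ in the basis $(e_i)$ lie in $\mathrm{GL}_d(\calR_L^{(n)})$. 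Define $D^{(n)} := \bigoplus_i \calR_L^{(n)} e_i$; this is by construction $\Gamma$-stable, and $\calR_L \otimes_{\calR_L^{(n)}} D^{(n)} = D$.

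Second, for $m \geq n$, set inductively $D^{(m+1)} := \calR_L^{(m+1)} \otimes_{\calR_L^{(m)}, \phi} D^{(m)}$, which is tautologically compatible with the Frobenius pushforward relation. Concretely, $D^{(m)}$ is the $\calR_L^{(m)}$-span of $\phi^{m-n}(e_1), \ldots, \phi^{m-n}(e_d)$ inside $D$: since $P \in \mathrm{GL}_d(\calR_L^{(n)})$, these remain an $\calR_L$-basis of $D$, which gives $\calR_L \otimes_{\calR_L^{(m)}} D^{(m)} = D$. The $\Gamma$-stability of $D^{(m)}$ then propagates upward from that of $D^{(n)}$ using $\phi\gamma = \gamma\phi$ together with the fact that $\phi \colon \calR_L^{(m)} \to \calR_L^{(m+1)}$ is $\Gamma$-equivariant, together with condition (ii) above.

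For uniqueness at each level $m$, suppose $\tilde D^{(m)}$ is a second family satisfying the same hypotheses. After passing to a common basis and replacing $n$ by a possibly larger integer, the transition matrix $U \in \mathrm{GL}_d(\calR_L)$ relating them must satisfy $U = P \cdot \phi(U) \cdot P^{-1}$ iteratively, with $U \in \mathrm{GL}_d(\calR_L^{(m)}) \cap \mathrm{GL}_d(\phi(\calR_L^{(m-1)}))$ for every $m$; a standard pigeonhole/overconvergence argument of Cherbonnier--Colmez and Kedlaya type forces $U$ to be the identity after suitable normalization, because a $\phi$-equivariant invertible matrix that is defined on annuli of arbitrarily large width must already be defined on $\calR_L^{(n)}$. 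This is precisely the argument Berger gives in the proof of \cite[Theorem 1.3.3]{Ber08}, so I would simply cite it at this point.

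The main obstacle is the uniqueness part, since existence essentially falls out of choosing a basis. The subtle point is that one cannot just intersect $D^{(m)}$ with $\tilde D^{(m)}$; one must use in an essential way that both satisfy the Frobenius compatibility for \emph{all} $m \geq n$, which together with étaleness of $\phi$ prevents any nontrivial deformation of the chosen lattice.
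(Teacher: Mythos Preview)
The paper gives no proof of this lemma at all: it simply records that this is Theorem~1.3.3 of \cite{Ber08}. Your proposal therefore already goes beyond what the paper does, and since you ultimately also defer to Berger for the hard step, the two are in agreement in spirit.

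A couple of points on your sketch. For existence, knowing that the matrices of finitely many topological generators of $\Gamma$ lie in $\mathrm{GL}_d(\calR_L^{(n)})$ is not by itself enough to conclude that $D^{(n)}$ is stable under \emph{all} of $\Gamma$; one still needs a short continuity argument using the Fr\'echet topology on $\calR_L^{(n)}$ (this is routine, but it is part of what Berger actually checks). For uniqueness, the relation $U = P\,\phi(U)\,P^{-1}$ does not follow from your setup: if $\tilde e = U e$ with $\phi(e)=Pe$ and $\phi(\tilde e)=\tilde P\tilde e$, what one obtains is $\tilde P = \phi(U)\,P\,U^{-1}$, which involves both Frobenius matrices and is not an equation for $U$ alone. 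More to the point, the desired conclusion is not that $U$ is the identity but that $U \in \mathrm{GL}_d(\calR_L^{(m)})$ for every $m \geq n$, i.e., that the two \emph{submodules} $D^{(m)}$ and $\tilde D^{(m)}$ coincide; distinct bases of the same $D^{(m)}$ are of course allowed. Since you end by citing \cite{Ber08} anyway this is harmless for the paper, but the uniqueness paragraph as written does not stand on its own.
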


The smallest integer $n$ satisfying the property in Lemma \ref{increasing sequence of phi-module} is denoted as $n(D)$.

For a $\pg$-module $D$ over $\calR_L$, one can define $\psi$-operator on $D$ by $\psi(\phi(x) \otimes f)= x \otimes \psi(f)$ for $x \in D$ and $f \in \calR_L$, which turns out to be well-defined, continuous and $L$-linear.

For each $n \geq n(D)$, define
\[ \ddifn^+(D) = D^{(n)} \otimes_{\iota_n,\calR_L^{(n)}} L_n[[t]]. \]
We put $\iota_n: D^{(n)} \to \ddifn^+(D): x \mapsto x \otimes 1$ and 
\[ \can=\can_n: \ddifn^+(D) \to \mathbf{D}_{\dif,n+1}^+(D): f(t) \otimes x \mapsto f(t) \cdot \iota_{n+1}(\phi(x)) \]
for $f(t) \in L_n[[t]]$ and $x \in D^{(n)}$.
We set 
\[ \ddifn(D) = \ddifn^+(D)[1/t], \ddif^{(+)}(D) = \varinjlim_{n} \ddifn^{(+)} (D),\]
here the injective limit is taken over $(\can_n)_{n \geq n(D)}$.
On these modules, we define $\Gamma$-actions diagonally.

One can consider several complexes which are related to $p$-adic Hodge theory.
Let $R$ be a topological ring, $X$ a topological module over $R$.
If $X$ is equipped with a continuous $R$-linear $\Gamma$-action, then we put as a complex of $R$-modules  
\[ C^{\bullet}_\gamma(X)\coloneqq [ X^\Delta \xrightarrow{\gamma -1} X^\Delta ] \]
concentrated in degree $[0,1]$.
If $X$ is furthermore equipped with a continuous $R$-linear action of $\phi$ or $\psi$ commuting the $\Gamma$-action, then we put, again as a complex of $R$-modules,
\[ C^{\bullet}_{*,\gamma}(X)\coloneqq [ X^\Delta \xrightarrow{(\gamma-1,*-1)} X^\Delta \oplus X^\Delta \xrightarrow{(*-1)\oplus(1-\gamma)} X^\Delta ] \]
concentrated in degree $[0,2]$ for $* = \phi, \psi$,  and 
\[ C^{\bullet}_{\psi}(X)\coloneqq [ X^\Delta \xrightarrow{\psi -1} X^\Delta ] \]
concentrated in degree $[1,2]$.
For each complex $C^\bullet_\square(X)$ above, its $i$-th cohomology group is denoted as $\mathrm{H}^i_{\square}$.
For a $\pg$-module $D$ over $\calR_L$ of rank $r$, we use the following special notations
\[ \dcris(D)= \Hg^0(D[1/t]) , \quad \ddr(D)=\Hg^0(\ddif(D)). \]
These spaces are of dimension $\leq r$ over $L$, and we say $D$ is crystalline (resp. de Rham) if $\dim_L(\dcris(D)) = r$ (resp. $\dim_L(\ddr(D))= r$.)
For $i \in \Z$, we also define $ \ddr^i(D) = \ddr(D) \cap t^i \ddif^+(D)$ and $t(D) = \ddr(D)/\ddr^0(D)$.
When $D$ is de Rham, then we say that $h \in \Z$ is a Hodge-Tate weight of $D$ if $\ddr^{-h}(D) /\ddr^{-h+1}(D) \neq 0$, and refer its dimension as the multiplicity of $h$.
We put $h_M$ as the sum of the Hodge-Tate weights of $M$ with multiplicity.

\subsection{Morphisms induced by distributions}
\label{22}

Let $\mathcal{W}$ be the Berthelot generic fiber of the Iwasawa algebra $\Zp[[\Gamma]]$, and define the distribution algebra $\dist$ as the global section $\Gamma(\mathcal{W},\mathcal{O})$.
In this subsection, we consider several morphisms induced by an element of $\dist$.
Then we prove a theorem about relationships between such morphisms and Bloch-Kato morphisms, which will be used as a key ingredient for our main theorem.

We recall natural $\dist$-actions on several objects related to a $\pg$-module $D$ over $\calR_L$.
For each $n \geq n(D)$, we can equip with $D^{(n)}$ and $\ddifn^+(D)$ natural $\dist$-actions.
As in \cite{KPX14}, for each $n \geq n(D)$, we can equip $D^{(n)}$, $D[1/t]^{(n)}$ with natural $\dist$-actions, which extends to $D$ and $D[1/t]$.
Also, for each $n \geq n(D)$, we can equip $\ddifn^+(D)$ with a natural $\dist$-action.
In fact, for any $n \geq 1$, we can equip with a natural $\dist$-action a finite generated $L_n[[t]]$-module $X$ with semilinear and continuous $\Gamma$-action with respect to the canonical Frech\'et topology as follows. 
Since one has $X=\varprojlim_{n}X/t^nX$ with the quotient $X/t^nX$ is a finite dimensional $L$-vector space with $L$-linear continuous $\Gamma$-action, it suffices to define a natural $\dist$-action on arbitrary finite dimensional $L$-vector space $M$ with an $L$-linear continuous $\Gamma$-action. 
First, it is easy to see that $\Gamma$-action on $M$ naturally extends to a continuous $\mathcal{O}_L[[\Gamma]]$-action.
Since $M$ is finite dimensional $L$-vector space, the the action of $\mathcal{O}_L[[\Gamma]][1/p]$-factors through a quotient $R_0$ of $\mathcal{O}_L[[\Gamma]][1/p]$ of finite length. 
Since the maximal ideals of $\mathcal{O}_L[[\Gamma]][1/p]$ bijectively correspond to closed maximal ideals of $\dist$, $R_0$ is also a quotient of $\dist$. i.e. the natural quotient map $\mathcal{O}_L[[\Gamma]][1/p]\rightarrow R_0$ factors through the inclusion $\mathcal{O}_L[[\Gamma]][1/p]\hookrightarrow \dist$. 

From now until the end of this section, we consider the following situation.
Let $D, D'$ be $\pg$-modules over $\calR_L$ such that $D[1/t] = D'[1/t]$. 
Then, we remark that one has $\ddifm(D)=\ddifm(D')$, $\ddr(D)=\ddr(D')$ and $\dcris(D)=\dcris(D')$.
Let  $\lambda \in \dist$ be any distribution.
We assume that, there exists some $n \geq \max \set{n(D),n(D')}$ such that we have
\[\lambda(\ddifm^+(D)) \subseteq \ddifm^+(D')\]
in $\ddifm(D)=\ddifm(D')$ for all $m \geq n$.

\begin{Prop}\label{morph}
For any $m \geq n$, we have $\lambda(D^{(m)}) \subseteq (D')^{(m)}$.
In particular, we have $\lambda(D) \subseteq D'$.
\end{Prop}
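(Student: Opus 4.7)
The plan is to reduce the containment $\lambda(D^{(m)}) \subseteq (D')^{(m)}$ to a local condition at each zero of $t$ on the level-$m$ annulus, and then to read that local condition off from the hypothesis at the $\mathbf{D}_{\dif}^+$-level. First I would set up the common ambient module: since $D[1/t] = D'[1/t]$ as $\pg$-modules over $\calR_L[1/t]$, a uniqueness argument in the spirit of Lemma \ref{increasing sequence of phi-module} shows that $D^{(m)}[1/t] = (D')^{(m)}[1/t]$ as $\calR_L^{(m)}[1/t]$-submodules of $D[1/t]$ -- this is the same identification already used in the paper to get $\ddifm(D)=\ddifm(D')$. I will call this common localization $\tilde{D}^{(m)}$. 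Both $D^{(m)}$ and $(D')^{(m)}$ then sit as full $\calR_L^{(m)}$-lattices in $\tilde{D}^{(m)}$, and for every $k \geq m$ the embedding $\iota_k$ extends $\calR_L^{(k)}[1/t]$-linearly to a $\Gamma$-equivariant continuous map $\iota_k : \tilde{D}^{(m)} \to \mathbf{D}_{\dif,k}(D) = \mathbf{D}_{\dif,k}(D')$, using the inclusion $\tilde{D}^{(m)} \subseteq \tilde{D}^{(k)}$.

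Next, I would establish the following local criterion: an element $y \in \tilde{D}^{(m)}$ belongs to $(D')^{(m)}$ if and only if $\iota_k(y) \in \mathbf{D}_{\dif,k}^+(D')$ for every $k \geq m$. Choosing a free $\calR_L^{(m)}$-basis of $(D')^{(m)}$ and the induced $L_k[[t]]$-basis of $\mathbf{D}_{\dif,k}^+(D')$ reduces this to the scalar statement that $f \in \calR_L^{(m)}[1/t]$ lies in $\calR_L^{(m)}$ if and only if $\iota_k(f) \in L_k[[t]]$ for all $k \geq m$. The latter is the classical fact that the zeros of $t = \log(1+T)$ inside the annulus $|\zeta_{p^m} - 1| \leq |T| < 1$ are exactly the points $T = \zeta_{p^k} - 1$ for $k \geq m$, and that $\iota_k$ records the Laurent expansion of $f$ at such a point.

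With the criterion in hand, the conclusion is immediate. For any $x \in D^{(m)}$ with $m \geq n$ and any $k \geq m$, the $\Gamma$-equivariance and continuity of $\iota_k$ make it $\dist$-equivariant, so $\iota_k(\lambda(x)) = \lambda(\iota_k(x))$. Since $D^{(m)} \subseteq D^{(k)}$, we have $\iota_k(x) \in \mathbf{D}_{\dif,k}^+(D)$, and the hypothesis at level $k$ gives $\lambda(\iota_k(x)) \in \mathbf{D}_{\dif,k}^+(D')$; the criterion then yields $\lambda(x) \in (D')^{(m)}$. The ``in particular'' assertion $\lambda(D) \subseteq D'$ follows by passing to the union, using $D = \bigcup_{m \geq n(D)} D^{(m)}$.

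The main obstacle is the local criterion in the second paragraph. Although it is a standard fact about holomorphy on the level-$m$ annulus, making it precise requires the identification of the ambient $\calR_L^{(m)}[1/t]$-module $\tilde{D}^{(m)}$ and a basis-dependent reduction to the scalar case. The remaining ingredients --- the identification $D^{(m)}[1/t] = (D')^{(m)}[1/t]$, the $\dist$-equivariance of $\iota_k$ coming from $\Gamma$-equivariance plus continuity, and the direct-limit argument for the global statement --- are essentially bookkeeping.
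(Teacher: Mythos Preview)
Your proposal is correct and follows essentially the same route as the paper: both arguments hinge on the characterization of $(D')^{(m)}$ inside $D^{(m)}[1/t]$ as those elements whose images under every $\iota_{m'}$ ($m' \geq m$) land in $\mathbf{D}^+_{\dif,m'}(D')$, after which the $\dist$-equivariance of $\iota_{m'}$ and the hypothesis finish the job. The only difference is that the paper simply cites this criterion from \cite[II.1]{Ber08} (with the ambient module taken as $t^{-h}D^{(m)}$ for large $h$), whereas you sketch a direct proof via basis reduction to the scalar case; so the ``main obstacle'' you flag is already handled by that reference.
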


\begin{proof}
By \cite[II.1]{Ber08}, the submodule $(D')^{(m)} \subseteq (D')^{(m)}[1/t]=D^{(m)}[1/t]$ can be written as 
\[ (D')^{(m)} = \set{ x \in t^{-h}D^{(m)} : \iota_{m'}(x) \in \mathbf{D}^+_{\dif,m'}(D') \text{ for all $m' \geq m$}}, \]
where $h \in \Z_{>0}$ is a sufficient large integer.
Since $D^{(m)}$ is an $\dist$-module and $\iota_m$ commutes with $\dist$-action for any $m \geq n(D)$, one has 
$$\iota_{m'}(\lambda x)=\lambda \iota_{m'}(x)\in \mathbf{D}^+_{\dif,m'}(D')$$
for each $x\in D^{(m)}$ and $m'\geqq m$ by our assumption $\lambda(\ddifm^+(D)) \subseteq \ddifm^+(D')$ for all $m \geq n$, which shows 
that $\lambda x\in (D')^{(m)}$. 
\end{proof}

The following corollary is fundamental.
\begin{Coro}\label{cpxmorph}
Multiplying by $\lambda$ induces morphisms of complexes 
\[ C^\bullet_{\phi,\gamma}(D) \to C^\bullet_{\phi,\gamma}(D'), \quad C^\bullet_{\psi,\gamma}(D) \to C^\bullet_{\psi,\gamma}(D')\]
of $L$-vector spaces,
\[ C^\bullet_\psi(D) \to C^\bullet_\psi(D')\] 
of $\dist$-modules, and
\[ C^\bullet_\gamma(\ddifm^{(+)}(D)) \to C^\bullet_\gamma(\ddifm^{(+)}(D')) \]
of $L_m[[t]]$-modules for each $m \geq n$.
\end{Coro}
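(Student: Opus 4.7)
The plan is to derive all four morphisms of complexes from a single principle: multiplication by $\lambda$ preserves $\Delta$-invariants and commutes with every operator $\gamma$, $\phi$, $\psi$ appearing in the differentials, while the ``containment'' problem of $\lambda\cdot(-)$ landing in the correct target has already been settled — by Proposition \ref{morph} for the complexes built from $D$, and by the standing hypothesis $\lambda(\ddifm^+(D))\subseteq\ddifm^+(D')$ for those built from $\ddifm^{(+)}(D)$ (extending to $\ddifm(D)$ by inverting $t$, since $\lambda$ commutes with powers of $t$).

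For the commutativity step, I would invoke the construction of the $\dist$-actions recalled at the top of subsection \ref{22}. On $D$ and $D[1/t]$ the action is the continuous $\dist$-extension of the continuous $\Gamma$-action; on $\ddifm^+(D)$ it is defined by passing to finite length quotients $\ddifm^+(D)/t^k$, where it factors through a finite length quotient $R_0$ of $\mathcal{O}_L[[\Gamma]][1/p]$ that is itself a quotient of $\dist$. In each case the $\dist$-action is built by (continuous, or factored) extension of scalars from $L[\Gamma]$. Now $\Gamma$ is abelian and $\phi,\psi$ commute with $\Gamma$ by the very definition of a $\pg$-module, so $L[\Gamma]$ commutes with $\gamma$, $\phi$, $\psi$ on each of the modules at hand; by continuity (or factoring through $R_0$), so does every $\lambda\in\dist$. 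In particular $\lambda$ preserves $\Delta$-invariants, and for every relevant operator one has
\[ \lambda\circ(\ast-1) = (\ast-1)\circ\lambda,\qquad \ast\in\{\gamma,\phi,\psi\}, \]
together with the evident Koszul-type analogues used in the definitions of $C^\bullet_{\phi,\gamma}$ and $C^\bullet_{\psi,\gamma}$.

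Combining these two observations termwise yields the four morphisms of complexes. The linearity refinements are then immediate: the differential of $C^\bullet_\psi$ involves no $\gamma$, so multiplication by any $\lambda\in\dist$ is automatically $\dist$-linear; for $C^\bullet_{\phi,\gamma}$ and $C^\bullet_{\psi,\gamma}$ only $L$-linearity is claimed, which is clear since $\dist$ is an $L$-algebra; and for $C^\bullet_\gamma(\ddifm^{(+)}(D))$ the $L_m[[t]]$-linearity is read off from the $L_m[[t]]$-module structure built into the construction of the $\dist$-action on these finite length modules. The only point deserving genuine care — and the one I would flag as the main (mild) obstacle — is the commutativity of the $\dist$-action with $\phi$ and $\psi$ on $D$ and on $\ddifm^+(D)$; once this is spelled out from the construction just recalled, the corollary reduces to formal bookkeeping.
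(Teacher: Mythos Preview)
Your proposal is correct and follows the same approach as the paper's own proof, which is the one-sentence observation that $\phi,\psi$ are continuous (hence commute with the $\dist$-action) and that Proposition \ref{morph} handles the containment. You have simply unpacked these two ingredients in greater detail, including the $\ddifm^{(+)}$ case and the various linearity claims, which the paper leaves implicit.
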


\begin{proof}
Since the operators $\phi, \, \psi$ are continuous so that they commute with the $\dist$-action, Proposition \ref{morph} gives our assertion.
\end{proof}

By abuse of notation, we use the same expression $\times \lambda$ for the morphisms defined in Proposition \ref{morph}, the ones in Corollary \ref{cpxmorph}, and the induced ones between their cohomologies, which will cause no confusion. We remark that the action $\times \lambda$ on 
$\mathrm{H}^i_{\gamma}(\ddif(D))=\mathrm{H}^i_{\gamma}(\ddif(D'))$, $\ddr(D)=\ddr(D')$ and $\dcris(D)=\dcris(D')$ is just the multiplication by $\lambda(\bold{1})\in L$. Here, 
for any $\lambda\in \dist$, we denote by $\lambda(\mathbf{1})\in L$ the image of 
$\lambda$ by the map $f_{\mathbf{1}} : \dist\rightarrow L : [\gamma]\mapsto 1$ ($\gamma\in \Gamma$). 

Recall the following morphisms defined in \cite{Nak14}:
\[ \can: \Hpg^1(D) \to \Hg^1(\ddif(D)) :\, [(x,y)] \mapsto [\iota_n(x)], \]
\[ g_D: \ddr(D) = \Hg^0(\ddif(D)) \to \Hg^1(\ddif(D)): \, \alpha \mapsto [\log \chi(\gamma) \alpha ] .\]
Since they commute with $\dist$-action, we immediately obtain the following lemma.
\begin{Lemm}\label{lambdacompati}
The action $\times \lambda$ induces the following commutative diagrams $:$ 
\[
\xymatrixcolsep{4pc}
\xymatrix{
\Hpg^1(D) \ar[r]^-{\times \lambda}  \ar@{->}[d]_-{\can}  & \Hpg^1(D') \ar@{->}[d]^-{\can} \\
\Hg^1(\ddif(D)) \ar[r]_-{\times \lambda(\bold{1})} & \Hg^1(\ddif(D')),
}
\xymatrixcolsep{4pc}
\xymatrix{
\ddr(D) \ar[r]^-{\times \lambda(\bold{1})}  \ar@{->}[d]_-{g_D}  & \ddr(D') \ar@{->}[d]^-{g_{D'}} \\
\Hg^1(\ddif(D))\ar[r]_-{\times \lambda(\bold{1})} & \Hg^1(\ddif(D')).
}
\]
\end{Lemm}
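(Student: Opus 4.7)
The plan is to verify both squares by direct diagram chase, relying on two ingredients already in hand: the $\dist$-equivariance of the comparison maps $\iota_n$ (already exploited in the proof of Proposition \ref{morph}), and the remark immediately preceding the lemma, which identifies the $\dist$-action on $\mathrm{H}^i_\gamma(\ddif(D))$, $\ddr(D)$ and $\dcris(D)$ with scalar multiplication by $\lambda(\mathbf{1}) \in L$. Once these are granted no cocycle-level subtlety is needed.

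For the first square I would fix a representative $(x,y) \in D^\Delta \oplus D^\Delta$ of a class in $\Hpg^1(D)$, taken at some level $n$ large enough that both the hypothesis $\lambda(\ddifn^+(D)) \subseteq \ddifn^+(D')$ and the definition of $\can$ apply. The right-then-down composition sends the class to $[\iota_n(\lambda x)]$; by $\dist$-linearity of $\iota_n$ this equals $[\lambda \iota_n(x)]$, and by the remark applied to $\mathrm{H}^1_\gamma(\ddif(D'))$ it collapses to $\lambda(\mathbf{1})[\iota_n(x)]$. The down-then-right composition gives $\lambda(\mathbf{1}) \can([(x,y)]) = \lambda(\mathbf{1})[\iota_n(x)]$, so the two sides coincide.

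For the second square the argument is even shorter: $g_D$ is by definition multiplication by the scalar $\log \chi(\gamma) \in L$, viewed as a map $\Hg^0(\ddif(D)) \to \Hg^1(\ddif(D))$, while both horizontal arrows are multiplication by the scalar $\lambda(\mathbf{1}) \in L$, via the remark applied respectively to $\ddr(D)$ and to $\Hg^1(\ddif(D))$. Commutativity is then immediate from the commutativity of multiplication in $L$.

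The only real conceptual content therefore lies in the preceding remark, whose justification rests on the observation that the $\Gamma$-action on these $\gamma$-cohomologies is trivial (on $\Hg^0$ one is already taking $\Gamma$-invariants, and on $\Hg^1$ the operator $\gamma-1$ acts as zero modulo its image), so any $\lambda \in \dist$ necessarily acts through its augmentation $\lambda(\mathbf{1})$. Granted this reduction, the lemma is purely formal, which is why the author concludes with \emph{immediately}; I do not expect any substantive obstacle.
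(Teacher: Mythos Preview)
Your proposal is correct and follows the same approach as the paper: the paper's argument is simply the sentence preceding the lemma (``Since they commute with $\dist$-action, we immediately obtain the following lemma''), which relies on exactly the two ingredients you isolate---$\dist$-equivariance of $\iota_n$ (hence of $\can$) and the remark that $\lambda$ acts as the scalar $\lambda(\mathbf{1})$ on $\mathrm{H}^i_\gamma(\ddif(D))$ and $\ddr(D)$. Your explicit cocycle-level chase and your additional justification of the remark via triviality of the $\Gamma$-action on $\gamma$-cohomology are more detailed than the paper but add nothing genuinely different.
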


We next introduce a morphism of $L$-vector spaces
\[ \exp_D: t(D) \to \Hpg^1(D) \]
called the Bloch-Kato's exponential map, and if $D$ is de Rham, then we have another one
\[ \exp^*_D: \Hpg^1(D) \to \ddr^0(D)\]
called the Bloch-Kato's dual exponential map that is the Tate dual of $\exp_{D^*}$.
They are characterized by the following explicit formulae.

\begin{Theo}\label{ExplicitFormulae}
Let $D$ be a $\pg$-module over $\calR_L$.
\begin{enumerate}
	\item For $x \in \ddr(D)$, there exists $n \geq n(D)$ and $\tilde{x} \in D^{(n)}[1/t]^\Delta$ such that for any $m \geq n$ we have
	\[ \iota_m(\tilde{x}) - x \in \ddifm^+(D). \]
 	Using such an element $\tilde{x}$, we can calculate the value $\exp_D(x)$ as
 	\[  \exp_D(x) = [(\gamma-1)\tilde{x}, (\phi-1) \tilde{x}]   .\]

 	\item We assume that $D$ is de Rham.
 	Then $g_D$ is an isomorphism and $\exp^*_D$ is characterized by the following commutative diagram
	\[
	\xymatrixcolsep{4pc}
	\xymatrix{
	\ddr(D) \ar[r]^-{g_D}  \ar@{<-}[d]_-{\exp^*_D}  & \Hg^1(\ddif(D)) \ar@{<-}[d]^-{\can} \\
	\Hpg^1(D) \ar[r]_-{=} & \Hpg^1(D).
	}
	\]\end{enumerate}
\end{Theo}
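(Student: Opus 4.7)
The plan is to treat the two parts separately, leveraging the description of $D^{(m)}$ already used in the proof of Proposition \ref{morph}. For part (1), I would first produce the lift $\tilde{x}$. Since $x \in \ddr(D) \subseteq \ddif(D) = \varinjlim_m \ddifm(D)$, we can fix some $n \geq n(D)$ so that $x$ is represented by an element of $\ddifn(D) = \ddifn^+(D)[1/t]$. Writing $t^h x \in \ddifn^+(D)$ and approximating it $t$-adically by an element of $\iota_n(D^{(n)})$ (since $\iota_n(D^{(n)})$ is dense in $\ddifn^+(D)$ for the $t$-adic topology, modulo a fixed finite-length quotient), we obtain $y \in D^{(n)}[1/t]$ with $\iota_n(y) - x \in \ddifn^+(D)$. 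Setting $\tilde{x} = p_\Delta(y)$ preserves the congruence because $p_\Delta x = x$ (the $\Delta$-action on $\ddr(D)$ is trivial) and yields an element of $D^{(n)}[1/t]^\Delta$. The condition at higher $m \geq n$ is automatic from the commutative square $\iota_{m+1}\circ\phi = \can \circ \iota_m$ together with the fact that $\can$ preserves $\ddifm^+$.

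Next, I would verify that $(\gamma-1)\tilde{x}$ and $(\phi-1)\tilde{x}$ lie in $D$ and that the resulting class computes $\exp_D(x)$. Applying $\iota_m$ to $(\gamma-1)\tilde{x}$ gives $(\gamma-1)(\iota_m(\tilde{x}) - x) \in \ddifm^+(D)$ for every $m \geq n$, since $\gamma x = x$; the characterization of $D^{(m)}$ recalled in the proof of Proposition \ref{morph} then forces $(\gamma-1)\tilde{x} \in D^{(m)} \subseteq D$. A parallel computation using $\phi x = x$ and the above commuting square handles $(\phi-1)\tilde{x}$. Identifying the resulting cocycle with $\exp_D(x)$ is then a direct unravelling of the fundamental-exact-sequence definition of Bloch--Kato's map at the level of $\pg$-modules as adopted in \cite{Nak14}; well-definedness reduces to observing that two choices of $\tilde{x}$ differ by an element of $D^{(m)}$, producing a coboundary, and that raising $n$ is compatible via $\can$.

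For part (2), I would deduce that $g_D$ is an isomorphism when $D$ is de Rham by a direct computation of $\Hg^\bullet(\ddif(D))$: the de Rham hypothesis means $\ddif(D)$ admits, over $L_\infty[[t]][1/t]$, a basis of $\gamma$-fixed elements modulo $t$, so both $\Hg^0$ and $\Hg^1$ are free $L$-modules of rank $\mathrm{rank}(D)$, and multiplication by $\log\chi(\gamma)$ is bijective between them (a Lie-algebra cohomology duality for $\Gamma$). Granting this, the commutative diagram in (2) uniquely defines a map $\Hpg^1(D) \to \ddr^0(D)$, namely $g_D^{-1}\circ\can$. Identifying it with Bloch--Kato's dual exponential, i.e.\ with the Tate dual of $\exp_{D^*(1)}$, then amounts to matching the residue pairing on $\ddif$ with local Tate duality at the level of $\pg$-cohomology. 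This last compatibility is the main technical obstacle — everything else is a direct diagram chase — and I would handle it by appealing to the explicit formulae and duality calculations already established in \cite{Nak14}, rather than re-deriving them.
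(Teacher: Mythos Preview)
The paper does not actually prove this theorem: its entire proof is a pointer to \cite[Sections~2.3, 2.4]{Nak14} and \cite[Section~2B]{Nak17a}, together with the remark that the notation $\exp^*_D$ here corresponds to $\exp^*_{D^*}$ there. Your proposal, by contrast, sketches a genuine argument, and the sketch is in line with what those references contain: producing $\tilde{x}$ by $t$-adic approximation followed by averaging with $p_\Delta$, then invoking the description of $D^{(m)}$ recalled in the proof of Proposition~\ref{morph} to force $(\gamma-1)\tilde{x}$ and $(\phi-1)\tilde{x}$ into $D$, and for part~(2) reducing to the duality computations already carried out in \cite{Nak14}. So you are supplying more than the paper does, and nothing in your outline conflicts with the cited sources.

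One point of care in part~(1): the phrase ``using $\phi x = x$'' is shorthand that could mislead, since there is no literal $\phi$-action on $\ddr(D)$. What you are actually using is that $x \in \ddr(D)$ is fixed by the transition maps $\can$, so the identity $\iota_{m+1}\circ\phi = \can\circ\iota_m$ gives $\iota_{m+1}((\phi-1)\tilde{x}) = \can(\iota_m(\tilde{x})) - \iota_{m+1}(\tilde{x}) \equiv x - x \equiv 0 \pmod{\ddif^+_{m+1}(D)}$. This is clearly your intent, but it is worth making explicit if you write the argument out in full.
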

\begin{proof}
See \cite[Section 2.3, 2.4]{Nak14} or \cite[Section 2B]{Nak17a} for the definition of $\exp_D, \exp^*_D$ and the proofs of the above formulae.
We note that our notation $\exp^*_D$ corresponds to $\exp^*_{D^*}$ in those papers.
\end{proof}

To state a relation between $\times \lambda$ and Bloch-Kato morphisms, we need some preparation.

For any $\gamma\in \Gamma \setminus \Gamma_{\mathrm{tor}}$, we set 
$$\omega\coloneqq\frac{1}{\mathrm{log}(\chi(\gamma))}\frac{d[\gamma]}{[\gamma]}=\frac{1}{\mathrm{log}(\chi(\gamma))}\frac{d([\gamma]-1)}{[\gamma]}\in \Omega^{1, \mathrm{an}}_{\dist/L}
\coloneqq\Gamma(\mathcal{W}, \Omega^1_{\mathcal{W}/L}).$$
This is independent of the choice of $\gamma$ since one has
$$\frac{1}{\mathrm{log}(\chi(\gamma^a))}\frac{d[\gamma]^a}{[\gamma]^a}=\frac{1}{a\mathrm{log}(\chi(\gamma))}a\frac{d[\gamma]}{[\gamma]}=\frac{1}{\mathrm{log}(\chi(\gamma))}\frac{d[\gamma]}{[\gamma]}$$ for any non zero $a\in \mathbb{Z}_p$. 
Then, one has
$\Omega^{1, \mathrm{an}}_{\dist/L}=\dist\omega$, which is a free $\dist$-module of rank one.  
For each $\lambda\in \dist$, we define 
$\frac{d\lambda}{\omega}\in \dist$ by $d\lambda=\frac{d\lambda}{\omega}\cdot \omega$. Explicitly, 
if $\gamma \in \Gamma_{\mathrm{free}}$ is a topological generator and $\lambda$ is of the form $\lambda=y\otimes f([\gamma]-1)$ 
with $y\in L[\Gamma_{\mathrm{tor}}]$ and $f(T)\in \mathcal{R}_L^+$, then one has 
$\frac{d\lambda}{\omega}=\mathrm{log}(\chi)(\gamma) y\otimes \frac{df}{dT}([\gamma]-1)$. 


In the following theorem, we shall compare the Bloch-Kato's morphisms of $D$ and $D'$ using $\lambda$.
It is the key lemma to prove our main theorem.

\begin{Theo}\label{keylemma}
\begin{enumerate}
	\item The diagram
	\[
	\xymatrixcolsep{4pc}
	\xymatrix{
	\ddr(D) \ar[r]^-{\times \lambda(\mathbf{1})}  \ar@{->}[d]_-{\exp_D}  & \ddr(D') \ar@{->}[d]^-{\exp_{D'}} \\
	\Hpg^1(D) \ar[r]_-{\times \lambda} & \Hpg^1(D')
	}
	\]
	commutes.
	\item Assume that $D$ or $D'$ is (thus both are) de Rham.
	Then the diagram
	\[
	\xymatrixcolsep{4pc}
	\xymatrix{
	\ddr(D) \ar[r]^-{\times \lambda(\mathbf{1})}  \ar@{<-}[d]_-{\exp^*_D}  & \ddr(D') \ar@{<-}[d]^-{\exp^*_{D'}} \\
	\Hpg^1(D) \ar[r]_-{\times \lambda} & \Hpg^1(D'),
	}
	\]
	commutes.
	\item Assume further that $\lambda(\mathbf{1})=0$
	Then the diagram	
	\[
	\xymatrixcolsep{4pc}
	\xymatrix{
	\ddr(D) \ar[r]^-{\times \frac{d\lambda}{\omega}(\mathbf{1})}  \ar@{<-}[d]_-{\exp^*_D}  & \ddr(D') \ar@{->}[d]^-{\exp_{D'}} \\
	\Hpg^1(D) \ar[r]_-{\times \lambda} & \Hpg^1(D')
	}
	\]
	commutes.
\end{enumerate}
\end{Theo}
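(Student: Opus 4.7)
The plan is to prove the three parts in order, using the explicit formulas in Theorem~\ref{ExplicitFormulae} together with the compatibilities of Lemma~\ref{lambdacompati}. For part~(1), I would use the explicit formula for $\exp_D$: given $x \in \ddr(D)$, choose a lift $\tilde{x} \in D^{(n)}[1/t]^\Delta$ with $\iota_m(\tilde{x}) - x \in \ddifm^+(D)$. Since $x$ is $\Gamma$-invariant, $\lambda \cdot x = \lambda(\mathbf{1}) x$, so by Proposition~\ref{morph} and the hypothesis on $\lambda$, the element $\lambda \tilde{x}$ satisfies $\iota_m(\lambda \tilde{x}) - \lambda(\mathbf{1}) x = \lambda(\iota_m(\tilde{x}) - x) \in \ddifm^+(D')$ and hence computes $\exp_{D'}(\lambda(\mathbf{1}) x)$ via the explicit formula; since $\lambda$ commutes with $\phi$ and $\gamma$, the resulting cocycle is $\lambda \cdot ((\gamma-1)\tilde{x}, (\phi-1)\tilde{x})$, representing $\lambda \cdot \exp_D(x)$. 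For part~(2), I would apply $\lambda$ to the characterizing identity $g_D \circ \exp^*_D = \can$: by Lemma~\ref{lambdacompati}, $g_{D'}(\lambda(\mathbf{1}) \exp^*_D(y)) = \lambda \cdot g_D(\exp^*_D(y)) = \lambda \cdot \can(y) = \can(\lambda y) = g_{D'}(\exp^*_{D'}(\lambda y))$, and since $g_{D'}$ is an isomorphism in the de Rham case (Theorem~\ref{ExplicitFormulae}(2)), the desired commutativity follows.

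For part~(3), the strategy is to construct an explicit lift $\tilde{u}$ of $\frac{d\lambda}{\omega}(\mathbf{1}) x$ whose associated cocycle recovers $(\lambda \bar{x}, \lambda \bar{z})$. Write $y = [(\bar{x}, \bar{z})]$, $x = \exp^*_D(y) \in \ddr^0(D)$, and fix $w \in \ddif(D)$ satisfying $\iota_n(\bar{x}) = \log\chi(\gamma) x + (\gamma - 1) w$. Using the de Rham hypothesis---specifically the isomorphism $\ddr(D)/\ddr^0(D) \xrightarrow{\sim} (\ddif(D)/\ddif^+(D))^\gamma$---one modifies $w$ by an element of $\ddr(D)$ (which leaves $(\gamma-1) w$ unchanged) and then averages over $\Delta$ to arrange $w \in \ddif^+(D)^\Delta$, hence $w \in \ddifm^+(D)^\Delta$ for $m$ sufficiently large. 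Since $\lambda(\mathbf{1}) = 0$, one can factor $\lambda p_\Delta = ([\gamma] - 1) g$ in $p_\Delta \dist$; a Leibniz computation using $\omega = \frac{1}{\log\chi(\gamma)}\frac{d[\gamma]}{[\gamma]}$ gives $\log\chi(\gamma) g(\mathbf{1}) = \frac{d\lambda}{\omega}(\mathbf{1})$. Set $\tilde{u} := g \bar{x}$. Then $\iota_m(\tilde{u}) = g \cdot \iota_m(\bar{x}) = \log\chi(\gamma) g(\mathbf{1}) x + g(\gamma-1) w = \frac{d\lambda}{\omega}(\mathbf{1}) x + \lambda w$, where the last equality rewrites $g(\gamma-1) w = ([\gamma]-1) g w = \lambda p_\Delta w = \lambda w$; hence $\iota_m(\tilde{u}) - \frac{d\lambda}{\omega}(\mathbf{1}) x = \lambda w \in \ddifm^+(D')$ by the hypothesis on $\lambda$. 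The cocycle relation $(\phi - 1)\bar{x} = (\gamma - 1)\bar{z}$ likewise yields $(\gamma - 1)\tilde{u} = \lambda \bar{x}$ and $(\phi - 1)\tilde{u} = \lambda \bar{z}$, so the explicit formula gives $\exp_{D'}(\frac{d\lambda}{\omega}(\mathbf{1}) x) = [(\lambda \bar{x}, \lambda \bar{z})] = \lambda y$ in $\Hpg^1(D')$.

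The main obstacle is the bookkeeping in part~(3): a priori $w$ is only determined up to a $\gamma$-invariant class in $\ddif(D)/\ddif^+(D)$, and one must invoke the de Rham hypothesis to guarantee that every such class lifts to an element of $\ddr(D)$, which is precisely what permits the reduction to $w \in \ddif^+(D)$. Without this step, a residual polar contribution from $w$ would obstruct the crucial congruence $\iota_m(\tilde{u}) \equiv \frac{d\lambda}{\omega}(\mathbf{1}) x \pmod{\ddifm^+(D')}$, and the identity $g(\gamma-1) w = \lambda w$ would fail to land in $\ddifm^+(D')$.
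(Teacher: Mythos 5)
Your argument is correct and follows the paper's proof in all essential respects: parts (1) and (2) are argued identically (via the explicit formula for $\exp_D$ and Lemma \ref{lambdacompati} plus the characterization in Theorem \ref{ExplicitFormulae}(2)), and for part (3) both proofs divide the coboundary identity $\iota_m(x)-\log\chi(\gamma)\alpha \in (\gamma-1)\ddifm^+(D)^\Delta$ by $[\gamma]-1$ (your $g$ is the paper's $\lambda_0=\lambda p_\Delta/(\gamma-1)$) and then identify the resulting cocycle with $\exp_{D'}\bigl(\tfrac{d\lambda}{\omega}(\mathbf{1})\alpha\bigr)$. The only difference is presentational: you spell out the use of the de Rham hypothesis to adjust $w$ into $\ddif^+(D)^\Delta$, whereas the paper compresses this into the assertion that $[\iota_m(x)]=[\log\chi(\gamma)\alpha]$ holds already in $\Hg^1(\ddifm^+(D))$.
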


\begin{proof}
First we prove (1).
Let $\alpha$ be an element of $\ddr(D)$.
By Theorem \ref{ExplicitFormulae}(1), there exist an integer $n \geq \max \{n(D),n(D')\}$ and an element $x \in D^{(n)}[1/t]^{\Delta}$ such that 
\[ \iota_m (x) - \alpha \in \mathbf{D}^+_{\dif,m}(D)\]
for any $m \geq n$.
Then one has
\[ \exp_D(\alpha) = [(\gamma -1)x,(\phi-1)x ] \in \Hpg^1(D) .\]
Thus, its image under the map $\times \lambda: \Hpg^1(D) \to \Hpg^1(D')$ is equal to
\[ [\lambda((\gamma -1)x), \lambda((\phi-1)x)] = [(\gamma -1)(\lambda x), (\phi-1)(\lambda x) ]\in \Hpg^1(D').\]
This is nothing but $\exp_{D'}(\lambda \alpha)$ because $\lambda x\in D'^{(n)}[1/t]^{\Delta}$ satisfies 
\[  \iota_m(\lambda x) - \lambda(\alpha) = \lambda(\iota_m (x) - \alpha) \in \mathbf{D}^+_{\dif,m}(D').\]
for any $m \geq n$ by Proposition \ref{morph}.

(2) follows immediately by Theorem \ref{ExplicitFormulae}(2) and Lemma \ref{lambdacompati}.

We shall prove (3).
Assume that $D$ and $D'$ are de Rham and $\lambda(\mathbf{1})=0$. 
We remark that the latter implies that one can write $\lambda p_{\Delta}=(\gamma-1)\lambda_0$ for some $\lambda_0\in \dist$. 
Let $[x,y] \in \mathrm{H}^1_{\phi,\gamma}(D)$ and put $\alpha = \exp^*_D\qty([x,y]) \in \ddr^0(D)$.
By replacing $n$ larger if necessary, we may assume that $x \in (D^{(n)})^\Delta$.
Take $m \geq n$ arbitrary.
By Theorem \ref{ExplicitFormulae}(2), one has 
$$[\iota_m(x)]=[\log \chi(\gamma) \alpha] \in \mathrm{H}^1_{\gamma}(\ddifm^+(D)),$$
and hence one obtains
$$\iota_m(x)-\log \chi(\gamma) \alpha\in (\gamma-1)\ddifm^+(D)^{\Delta}.$$
Applying $\lambda p_{\Delta} /(\gamma-1) =\lambda_0\in \dist$ on the both sides gives
$$\iota_m\qty(\frac{\lambda p_{\Delta}}{\gamma-1}(x)) - \mathrm{log}(\chi(\gamma))\frac{\lambda p_{\Delta}}{\gamma-1}\alpha\in \lambda p_\Delta(\ddifm^+(D)^{\Delta})\subseteq \ddifm^+(D')^{\Delta}. $$
Since one has
$$\mathrm{log}(\chi(\gamma))\frac{\lambda p_{\Delta}}{\gamma-1}\alpha=\mathrm{log}(\chi(\gamma))\frac{\lambda p_{\Delta}}{\gamma-1}(\mathbf{1})\alpha=
\frac{d\lambda}{\omega}(\mathbf{1})\alpha,$$ 
we obtain 
$$\iota_m\qty(\frac{\lambda p_{\Delta}}{\gamma-1}(x)) - \frac{d\lambda}{\omega}(\mathbf{1})\alpha\in \ddifm^+(D')^\Delta.$$

Since $\frac{\lambda p_{\Delta}}{\gamma-1}(x) \in (D^{(n)})^\Delta \subseteq (D^{(n)}[1/t])^\Delta = (D'^{(n)}[1/t])^\Delta$ and we have taken $m \geq n$ arbitrary, the explicit formula for $\exp_{D'}$ gives that
\begin{align*}
\exp_{D'}\qty(\frac{d\lambda}{\omega}(\mathbf{1}) \alpha) &= \qty[(\gamma -1) \frac{\lambda p_{\Delta}}{\gamma-1}(x), (\phi-1)\frac{\lambda p_{\Delta}}{\gamma-1}(x) ] \\
&=[\lambda x , \frac{\lambda p_{\Delta}}{\gamma-1}(\phi-1)(x)]\\
&=[\lambda x , \frac{\lambda p_{\Delta}}{\gamma-1}(\gamma-1)(y)]\\ 
&=\lambda [x, y],
\end{align*}
which proves (3).
\end{proof}

\section{Big exponential maps in the Local $\varepsilon$-conjecture for $\pg$-modules}
\label{3}

In this section, we first recall briefly the definition of the de Rham $\varepsilon$-isomorphisms for $\pg$-modules, and state the local $\varepsilon$-conjecture for cyclotomic deformations.
Then, we define the big exponential maps and study their several properties.

\subsection{de Rham $\varepsilon$-isomorphisms for $\pg$-modules}
\label{31}
We recall de Rham $\varepsilon$-isomorphisms over Robba rings following \cite{Nak17a}. 

First, for each local field $L / \Qp$ and each $\pg$-module $D$ over $\calR_L$, we define a graded line $\Delta_L(D)$ over $L$ called the fundamental line attached to $D$ as follows.

By \cite{Liu08}, the complex $C^\bullet_{\phi,\gamma}(D)$ is a perfect complex of $L$-vector spaces, and we put
\[ \Delta_{L,1}(D) = \Det_{L}(C^\bullet_{\phi,\gamma}(D)), \]
which is a graded line over $L$.
Here, $\Det_L$ is the determinant functor defined by Knudsen-Mumford \cite{KM76}.
We define another graded $L$-vector space $\Delta_{L,2}(D)$ as follows.
By the classification of rank $1$ $\pg$-modules over $\calR_L$ \cite[Proposition 3.1]{Col08}, there exists a unique continuous homomorphism $\delta_{\Det_{\calR_L}(D)}: \Qp^\times \to L^\times$ such that there exists an isomorphism $\Det_{\calR_L}(D) \cong \calR_L(\delta_{\Det_{\calR_L}(D)})$, and we define
\[ \mathcal{L}_L(D) = \set{x \in \Det_{\calR_L}(D) | \phi(x) = \delta_{\Det_{\calR_L}(D)}(p)x, \gamma(x) =  \delta_{\Det_{\calR_L}(D)}(\chi(\gamma))x\  (\gamma\in \Gamma)}, \]
which turns out to be an $L$-vector space of dimension $1$.
We then define an graded line over $L$
\[ \Delta_{D,2}(D) = (\mathcal{L}_L(D), r_D), \]
where we put $r_D = \rank_{\calR_L}(D)$.
Finally, we define a graded line $\Delta_L(D)$ over $L$ called its fundamental line by
\[ \Delta_L(D) = \Delta_{L,1}(D) \boxtimes \Delta_{L,2}(D).\]

We also define the fundamental line $\Delta^\Iw_L(D)$ for the cyclotomic deformation of a general $\pg$-module $D$ over $\calR_L$.
By \cite{KPX14}, the complex $C^\bullet_\psi(D)$ is perfect, thus we may define
\[ \Delta^{\Iw}_{L,1}(D) \coloneqq \Det_\dist \qty(C^\bullet_\psi(D)). \]
We also define 
\[\Delta^{\Iw}_{L,2}(D)\coloneqq \Delta_{L,2}(D) \otimes_L \dist, \]
and define the fundamental life for the cyclotomic deformation
\[ \Delta^{\Iw}_L(D) \coloneqq \Delta^{\Iw}_{L,1}(M) \boxtimes \Delta^{\Iw}_{L,2}(M).\]
Recall that for any continuous character $\delta: \Gamma \to L^\times$, we can consider a $\pg$-module $D(\delta) = D e_\delta$ with a formal element $e_\delta$ on which we have
\[ \phi(x e_\delta) = \phi(x) e_\delta, \, \gamma (x e_\delta) = \delta(\gamma)\gamma(x) e_\delta. \]
In particular, we put $D^* = \Hom_{\calR_L}(D,\calR_L)(\chi)$.

As studied in Subsection 4A of \cite{Nak17a}, one has canonical isomorphisms 
\[ \ev_{\delta,j}: \Delta^{\Iw}_{L,j}(D) \otimes_{f_\delta} L \xrightarrow{\sim} \Delta_{L,j}(D(\delta)), \,\can_{\delta,j}: \Delta^\Iw_{L,j} (D) \otimes_{g_\delta} \dist \xrightarrow{\sim} \Delta^\Iw_{L,j} (D(\delta)) \]
for $j = 1, 2, \emptyset$, where $f_\delta: \dist \to L$ (resp. $g_\delta: \dist \to \dist$) is the continuous homomorphism of $L$-algebra extending $: \gamma \mapsto \delta^{-1}(\gamma)$ (resp. $\gamma \mapsto \delta^{-1}(\gamma)\gamma$).

The local $\varepsilon$-conjecture concerns canonical bases of the fundamental lines for de Rham $\pg$-modules, which we recall briefly as follows: see \cite{Nak17a} for the precise definition.
Let $M$ be a de Rham $\pg$-module over $\calR_L$.
Set $1_L = (L,0)$ as the trivial line.
We define the following two isomorphisms
\begin{align*}
\theta_{\dR}(M)&:1_L \xrightarrow{\sim} \Delta_{L,1}(M) \boxtimes \Det_{L}(\ddr(M)), \\
f_M&: \Delta_{L,2}(M) \xrightarrow{\sim} \Det_{L}(\ddr(M)).
\end{align*}

To define the isomorphism $\theta_{\dR}(M)$, we first recall that there exist exact sequences of $L$-vector spaces
\[ C^\bullet_1(M) : 0 \to \Hpg^0(M) \to \dcris(M) \xrightarrow{x \mapsto ((1-\phi)x,\bar{x})} \dcris(M) \oplus t(M) \to \Hpg^1(M)_f \to 0,\]
\[ C^\bullet_2(M) : 0 \to \Hpg^1(M)_{/f} \to \dcris(M^*)^{\lor} \oplus \ddr^0(M) \to \dcris(M^*)^\lor \to \Hpg^2(M) \to 0\]
obtained by the Bloch-Kato's fundamental sequences and Tate duality, both of which are concentrated in $[0,4]$.
We define the canonical isomorphism $\theta_{\dR}(M)$ as the inverse of the isomorphism
\begin{multline*} \theta_{\dR}(M) : \Delta_{L,1}(M) \boxtimes \Det_L(\ddr(M)) \\ 
\xrightarrow{(\sharp)} \Det_L(C^\bullet_1(M))^{[-1]} \boxtimes \Det_L(C^\bullet_2(M)) \xrightarrow{(\flat)} 1_L \boxtimes 1_L \xrightarrow{\can} 1_L,
\end{multline*}
where the isomorphism $(\sharp)$ is defined by cancellation $X \boxtimes X^{-1} \isom 1_L: a \otimes f \mapsto f(a)$ for each graded invertible line $X$ and the one $(\flat)$ is by the trivializations via the determinant functor.

Next we define the isomorphism $f_M$.
Since $M$ is de Rham, we have $\ddif^+(M) = \ddr(M) \otimes_{L} L_{\infty}[[t]]$ where $L_\infty[[t]] = \cup_{n \geq 1} L_n[[t]]$.
By Lemma.3.4 of \cite{Nak17a}, a map
\[ \mathcal{L}_L(M) \to \ddifn(\det_{\calR_L}(M)): x \mapsto \frac{1}{\varepsilon_L(W(M))} \frac{1}{t^{h_M}}x  \]
for sufficient large $n$ induces an isomorphism $f_M:\Delta_{L,2}(M) \xrightarrow{\sim} \Det_L(\ddr(M))$.
Here, the constant $\varepsilon_L(W(M)) \in L_\infty$ is defined by using the Weil-Deligne representation $W(M)$ attached to $M$ and the fixed basis $(\zeta_{p^n})_n \in \Zp(1)$, via the theory of $\varepsilon$-constants of Deligne-Langlands \cite{Del73}, Fontaine-Perrin-Riou \cite{FP94}.  

Using $\theta_\dR(M)$ and $f_M$, we define 
\[ \varepsilon^\dR_{L}(M) = (\id \boxtimes f_M^{-1}) \circ (\Gamma(M)\theta_\dR(M)): 1_L \xrightarrow{\sim} \Delta_L(M) \]
and call it the de Rham $\varepsilon$-isomorphism for $M$.
Here, the $\Gamma$-constant $\Gamma(M)$ for $M$ is defined by $\Gamma(M) = \prod_{1 \leq i \leq r} \Gamma^*(h_i)^{-1}$, where for $r \in \Z$ we put
\[ \Gamma^*(r) = 
\begin{cases}
(r-1)! & (r \geq 1) \\
\dfrac{(-1)^{r}}{(-r)!}& (r \leq 0).
\end{cases}
\]

Now we can state the local $\varepsilon$-conjecture for cyclotomic deformation for $\pg$-modules.
\begin{Conj}\label{Conjecture for Dfm}
For each finite extension $L/ \Qp$ and each de Rham $\pg$-module $D$ over $\calR_L$, there exists an isomorphism
\[\varepsilon^\Iw_L(D): 1_\dist \xrightarrow{\sim} \Delta^\Iw_L(D)\]
satisfying the following  commutative diagram
\[
\xymatrixcolsep{4pc}
\xymatrix{
\Delta^\Iw_L(D) \otimes_{f_\delta} L\ar[r]^-{\ev_\delta}  \ar[d]_-{\varepsilon^\Iw_L(D) \otimes \id}  & \Delta_L(D(\delta)) \ar@{->}[d]^-{\varepsilon^{\mathrm{dR}}_L(D(\delta))} \\ 
1_\dist \otimes_{f_\delta} L \ar[r]_-{\can} & 1_L.
}
\]
for arbitrary de Rham continuous characters $\delta: \Gamma \to L^\times$.
\end{Conj}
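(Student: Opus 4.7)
After choosing an $\dist$-basis of $\Delta^\Iw_L(D)$, any candidate $\varepsilon^\Iw_L(D)$ is specified by a single element of $\dist^\times$, and two such candidates differ by a unit $u\in \dist^\times$. The required compatibility with $\varepsilon^\dR_L(D(\delta))$ under $\ev_\delta$ forces $f_\delta(u)=1$ for every de Rham character $\delta=\chi^k\tilde\delta$. Since the integer powers $\chi^k$ (even without the finite-order twist) already form a Zariski-dense subset of $\mathcal{W}=\mathrm{Spec}(\dist)$—equivalently, no nonzero element of $\dist$ can vanish at all $\chi^k$, as a nonzero element of $\calR^+_L(\Gamma)$ has only finitely many zeros of bounded order on each affinoid—we conclude $u=1$, and uniqueness is proved.

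\textbf{Reduction via the main theorem.} For existence the natural plan is to invoke the main theorem of the present paper. Taking $M\coloneqq D$, consider $\nrig(M)$: a de Rham $\pg$-module underlying a $p$-adic differential equation with Frobenius structure and with all Hodge-Tate weights equal to zero. By the last assertion of the main theorem, if $\varepsilon^\Iw_L(\nrig(M))$ exists, then
\[ \varepsilon^\Iw_L(D) \coloneqq \Exp(M)\circ\varepsilon^\Iw_L(\nrig(M)) : 1_\dist \longrightarrow \Delta^\Iw_L(D) \]
is an $\dist$-linear isomorphism, and its de Rham compatibility at every character $\delta$ is automatic: glue the commuting triangle of the main theorem with the assumed compatibility for $\nrig(M)$, noting that $\Exp(M)_\delta\circ \varepsilon^\dR_L(\nrig(M)(\delta))=\varepsilon^\dR_L(D(\delta))$. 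Consequently the conjecture for arbitrary de Rham $D$ is reduced to the subcase where $D$ underlies a $p$-adic differential equation.

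\textbf{The remaining obstacle.} In that reduced setting one must exhibit a unit $u\in \dist^\times$ (relative to a chosen basis of $\Delta^\Iw_L(D)$) such that $f_\delta(u)$ equals the scalar $c_\delta\in L^\times$ cut out by $\varepsilon^\dR_L(D(\delta))$ for every de Rham $\delta$. This is a genuine $p$-adic interpolation problem: the constants $c_\delta$ bundle together local $\varepsilon$-factors attached to the Weil-Deligne representations $W(D(\delta))$, $\Gamma$-values at the (shifted) Hodge-Tate weights, and the Bloch-Kato trivializations entering $\theta_\dR$, and all of these twist subtly with $\delta$. In the crystalline and trianguline subcases, the big exponential maps of Perrin-Riou and of the second author furnish $u$ explicitly; the bulk of \cite{BB08}, \cite{LVZ13}, \cite{BV19}, and \cite{Nak17a} is devoted precisely to verifying the needed interpolation. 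For a general $p$-adic differential equation, however, no comparable structural device is presently available, so this final step—constructing $u$ from the prescribed values $c_\delta$ on the Zariski-dense set of de Rham characters—is the heart of the conjecture and the place where my proposal stops. The natural line of attack beyond what the main theorem already yields would be to exploit the additional structure of $\nrig(D)$ as a $p$-adic differential equation (for instance through a Dwork-type Frobenius-stable deformation over $\mathcal{W}$, or via $p$-adic Langlands functoriality in higher rank), but no complete strategy of this kind is currently known.
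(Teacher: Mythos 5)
The statement you were asked to prove is labelled a \emph{Conjecture} in the paper, and the paper does not prove it: it only records (i) the uniqueness remark immediately after the statement, via Zariski density of the de Rham characters in $\mathcal{W}$, and (ii) the reduction of the conjecture for $M$ to the conjecture for $\nrig(M)$, which is exactly Corollary \ref{main'}. Your proposal reproduces both of these correctly --- your uniqueness argument is the paper's density remark made explicit, and your ``reduction via the main theorem'' is precisely Corollary \ref{main'} --- and your honest acknowledgement that the existence of $\varepsilon^\Iw_L(\nrig(M))$ for a general $p$-adic differential equation remains open is not a gap in your reasoning but the actual state of the problem. In short, you have established everything the paper establishes about this statement, by essentially the same route.
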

Since the set of all the de Rham characters is Zariski dense in the weight space $\mathcal{W}$, the isomorphism $\varepsilon^\Iw_L(D)$ is uniquely determined (if it exists).

\subsection{Big exponential maps}
\label{32}

Throughout this section, let $L$ be a finite extension of $\Qp$, $M$ a de-Rham $(\phi, \Gamma)$-modules of rank $r$ over $\calR^+_L$, and $N = \nrig(M)$ its associated $p$-adic differential equation define by Berger in \cite{Ber08}.
Note that $N$ is characterized as the $\pg$-module in $M[1/t]$ satisfying $N[1/t] = M[1/t]$ and 
\[ \ddifn^+(N) = L_n[[t]] \otimes_L \ddr(M) \]
for a sufficient large $n$.

In this subsection, we construct the big exponential map of $M$
\[\Exp(M): \Delta^{\Iw}_L(N) \xrightarrow{\sim} \Delta^{\Iw}_L(M),\]
and prove its properties.
Its construction involves the theory of big exponential map, especially the $\delta(D)$-theorem studied in \cite{Nak14}, which generalizes the original $\delta(V)$-conjecture in \cite{Per94}.

First, we shall construct 
\[\Exp_1:\Delta^\Iw_{L,1}(N) \xrightarrow{\sim} \Delta^\Iw_{L,1}(M)\]
as follows.

Since all the complexes $\Hpsi^1(D)_\tors[0]$, $\Hpsi^1(D)[0]$ and $\Hpsi^2(D)[0]$ are perfect, there exist canonical isomorphisms 
\begin{align*}
&\Det_{\dist}(C^\bullet_\psi(D)) \cong \Det_{\dist}(\Hpsi^1(D)[0])^{-1} \boxtimes \Det_{\dist}(\Hpsi^2(D)[0])\\
&\cong \Det_{\dist}(\Hpsi^1(D)_{\free}[0])^{-1} \boxtimes \Det_{\dist}(\Hpsi^1(D)_\tors[0])^{-1} \boxtimes \Det_{\dist}(\Hpsi^2(D)[0]),
\end{align*}
where we set $\Hpsi^1(D)_{\free} = \Hpsi^1(D)/\Hpsi^1(D)_\tors$.
Extending the coefficients to the total fraction ring $Q(\dist)$ of $\dist$, we have
\begin{align*}
&\Det_{\dist}(C^\bullet_\psi(D)) \otimes_{\dist} Q(\dist) \\
&\cong \Det_{Q(\dist)} \qty(\Hpsi^1(D)_{\free} \otimes Q(\dist)[0])^{-1} \boxtimes (1_{Q(\dist)})^{-1} \boxtimes 1_{Q(\dist)} \\
&= \qty( \bigwedge^{r} \qty( \Hpsi^1(D)_{\free} \otimes_\dist  Q(\dist))^{-1}, \, -r),
\end{align*}
under which the image of $\Det_{\dist}(C^\bullet_\psi(D))$ is calculated as
\[ \bigwedge^{r} \Hpsi^1(D)_{\free} ^{-1} \cdot \chara_\dist (\Hpsi^1(D)_\tors) \cdot \chara_\dist \qty(\Hpsi^2(D))^{-1} .\]

On the other hand, let $h>0$ be a sufficient large integer satisfying $\ddr^{-h}(M) =\ddr(M)$.
Then by \cite[Lemma 3.6]{Nak14} we have $\left(\prod^{h-1}_{i=0}\nabla_{i}\right)(\ddifm^+(N)) \subseteq \ddifm^+(M)$ for any $m \geq n(M)$, and we define a morphism $\Exp_{(h)}(M): \Hpsi^1(N) \to \Hpsi^1(M)$ as the induced one by Corollary \ref{cpxmorph};
\[ \Exp_{(h)}(M)\coloneqq \times \prod^{h-1}_{i=0}\nabla_{i}:  \Hpsi^1(N) \to \Hpsi^1(M). \]
It induces an injective map $\overline{\Exp_{(h)}}(M): \Hpsi^1(N)_\free \to \Hpsi^1(M)_\free$, which turns out to be injective.
Since $\nabla_i \in \dist$ is a non-zero-divisor for any integer $i$, we can define a modified map $\overline{\Exp}(M):\wedge^{r}\Hpsi^1(N)_\free \otimes_\dist Q(\dist) \to \wedge^{r} \Hpsi^1(M)_\free \otimes_\dist Q(\dist)$ by 
\[\overline{\Exp}(M) = \bigwedge^r \overline{\Exp_{(h)}}(M) \otimes \frac{1}{\displaystyle\prod^r_{i=1} \prod^{h-h_i-1}_{j_i=1}\nabla_{h_i+j_i}} \cdot \id_{Q(\dist)} ,\]
where $h_1, \dots, h_r$ are the Hodge-Tate weights of $M$ with multiplicity.
Note that the right hand side doesn't depend on $h$, which justifies our notation $\overline{\Exp}(M)$.

To define $\Exp_1(M)$, the main part of $\Exp(M)$, the following theorem is essential.
It is nothing but theorem $\delta(D)$ in the context of the local $\varepsilon$-conjecture.
\begin{Theo}\label{delta(D)}
$\overline{\Exp}(M)$ is an isomorphism of $Q(\dist)$-modules.
Moreover, by restriction, it induces an isomorphism of $\dist$-modules
\begin{align*}
&\bigwedge^r \Hpsi^1(N)_\free \otimes \chara_\dist \qty(\Hpsi^1(N)_\tors)^{-1} \cdot \chara_\dist \qty(\Hpsi^2(N)) \\ 
&\longrightarrow \bigwedge^r \Hpsi^1(M)_\free \otimes \chara_\dist (\Hpsi^1(M)_\tors)^{-1} \cdot \chara_\dist \qty(\Hpsi^2(M)).
\end{align*}
\end{Theo}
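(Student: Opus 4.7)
The plan is to deduce Theorem~\ref{delta(D)} from the $\delta(D)$-theorem of~\cite{Nak14}, of which our statement is essentially a reformulation in the language of fundamental lines.

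First, I would reduce the $Q(\dist)$-isomorphism claim to a non-vanishing assertion by a rank count. By~\cite{KPX14}, the complex $C^\bullet_\psi(D)$ is perfect of Euler characteristic $-r$ for any $\pg$-module $D$ of rank $r$ over $\calR_L$, with $\Hpsi^2(D)$ torsion over $\dist$; consequently $\Hpsi^1(D) \otimes_\dist Q(\dist)$ is free of rank $r$ and $\Hpsi^2(D) \otimes_\dist Q(\dist) = 0$. Since $N[1/t] = M[1/t]$ forces $\mathrm{rank}_{\calR_L}(N) = \mathrm{rank}_{\calR_L}(M) = r$, the source and target of $\overline{\Exp}(M)$ are both free of rank one over $Q(\dist)$, and the $Q(\dist)$-isomorphism claim is equivalent to non-vanishing.

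Second, I would establish non-vanishing by specializing at a Zariski dense set of de Rham characters. Choose $\delta = \chi^k$ with $k$ so large that both $\Hpg^0(M(\chi^k))$ and $\Hpg^2(M(\chi^k))$ vanish and $\exp_{M(\chi^k)}$ is an isomorphism onto $\Hpg^1(M(\chi^k))$. The interpolation formulae for $\Exp_{M,h}$ proved in~\cite{Nak14}, together with Theorem~\ref{keylemma}, identify the specialization of $\Exp_{(h)}(M) \otimes_{f_\delta} L$ with a nonzero scalar multiple of $\exp_{M(\chi^k)}$ composed with the natural comparison maps. Since dividing by the non-zero-divisor $\prod_{i,j_i} \nabla_{h_i+j_i}$ preserves non-vanishing, the $Q(\dist)$-isomorphism claim follows.

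Third, for the refinement to an isomorphism of $\dist$-modules, I would observe that both sides become invertible after $Q(\dist)$-localization, so the claim can be checked after localization at every height-one prime $\mathfrak{p}$ of $\dist$. At each such prime this reduces to equating the $\mathfrak{p}$-adic valuation of the ``determinant'' of $\Exp_{(h)}(M)$---computed from the induced maps on the torsion part of $\Hpsi^1$ and on $\Hpsi^2$ on both sides---with that of $\prod_i \prod_{j_i=1}^{h-h_i-1} \nabla_{h_i+j_i}$. This is precisely the $\delta(D)$-theorem of~\cite{Nak14}, whose proof proceeds by d\'evissage according to the Hodge-Tate weights of $M$. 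I expect this d\'evissage to be the main obstacle: each weight $h_i$ must contribute exactly a factor $\prod_j \nabla_{h_i+j}$ to the discrepancy between torsion characteristic ideals on the two sides, and matching them on the nose (rather than merely up to a unit) requires the careful bookkeeping performed in~\cite{Nak14}.
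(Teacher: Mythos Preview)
Your proposal is correct and, for the second claim, follows the paper's approach: both you and the paper reduce the $\dist$-module statement to the identity of fractional ideals which is exactly \cite[Theorem~3.14]{Nak14} (the $\delta(D)$-theorem). The paper simply writes out that identity and cites \cite{Nak14}; your height-one-prime localization is the same thing said differently. The d\'evissage you anticipate as an obstacle is the content of \cite{Nak14} and is treated here as a black box, not reproved.

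For the first claim, however, your route is considerably more elaborate than the paper's. The paper disposes of the $Q(\dist)$-isomorphism in one line: by construction $\overline{\Exp}(M)$ is the map induced by multiplying by a product of $\nabla_i^{\pm 1}$, each a non-zero-divisor in $\dist$ and hence a unit in $Q(\dist)$; since (as stated just before the theorem) $\overline{\Exp_{(h)}}(M)$ is already injective on the free parts, the result is immediate. Your argument via specialization at $\chi^k$ for $k \gg 0$ and the interpolation formulae of \cite{Nak14} works, but it imports machinery that is not needed here and risks a logical tangle, since those interpolation results are proved in the same paper whose main theorem you are invoking. The paper's argument is the cleaner one; yours has the minor virtue of making explicit why the rank count goes through, but that is already implicit in the perfectness statements from \cite{KPX14}.
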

\begin{proof}
Since $\overline{\Exp}(M)$ is a multiplication of a product of non-zero divisors $\nabla^\pm_i \in Q(\dist)$ with $i \in \Z$, it is an isomorphism as $Q(\dist)$-modules.

For the latter assertion, we first remark that in $\wedge^{r} \Hpsi^1(M)_\free \otimes_\dist Q(\dist)$ we have
\begin{multline*}
\overline{\Exp}(M)\qty( \bigwedge^r \Hpsi^1(N)_\free) =  \overline{\Exp_{(h)}}(M) \qty(\bigwedge^r \Hpsi^1(N)_\free) \otimes \qty( \displaystyle\prod^r_{i=1} \prod^{h-h_i-1}_{j_i=1}\nabla_{h_i+j_i})^{-1} \\
=   \det_{\dist} (\overline{\Exp_{(h)}}(M)) \cdot \bigwedge^r \Hpsi^1(M)_\free \otimes \qty( \displaystyle\prod^r_{i=1} \prod^{h-h_i-1}_{j_i=1}\nabla_{h_i+j_i})^{-1},
\end{multline*}
where $\det_{\dist} (\overline{\Exp_{(h)}}(M)) \subseteq \dist$ is the determinant ideal of $\overline{\Exp_{(h)}}(M)$.
Therefore, the claim is equivalent to the equality 
\begin{multline*}
\qty( \displaystyle\prod^r_{i=1} \prod^{h-h_i-1}_{j_i=1}\nabla_{h_i+j_i})^{-1} \det_{\dist} (\overline{\Exp_{(h)}}(M)) \cdot \chara_\dist \qty(\Hpsi^1(N)_\tors)^{-1} \cdot \chara_\dist \qty(\Hpsi^1(M)_\tors)     \\
= \chara_\dist \qty(\Hpsi^2(M)) \cdot \chara_\dist \qty(\Hpsi^2(N))^{-1}
\end{multline*}
of fractional ideals in $Q(\dist)$, which is proved as the theorem $\delta(D)$ \cite[Theorem 3.14.]{Nak14}.
\end{proof}

\begin{Defi}
We define an isomorphism 
\[ \Exp_1(M): \Delta^\Iw_{L,1}(N) \xrightarrow{\sim} \Delta^\Iw_{L,1}(M) \]
as the isomorphism corresponding to the one appearing Theorem \ref{delta(D)} under the functor $[-1]$.
\end{Defi}

Second, we shall define 
\[ \Exp_2(M):\Delta^\Iw_{L,2}(N) \xrightarrow{\sim} \Delta^\Iw_{L,2}(M).\]

\begin{Lemm}\label{lem_keyExp2}
Under the canonical identification of $\det_{\calR_L}(M)[1/t]$ and $\det_{\calR_L}(N)[1/t]$, we have
\[ \det_{\calR_L} (N) = \nrig(\det_{\calR_L} (M)) = t^{-h_M} \det_{\calR_L}(M).  \]
\end{Lemm}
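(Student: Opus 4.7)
The plan is to prove the two equalities separately. First I would establish that $\nrig$ commutes with determinants by using the characterizing property of $\nrig$. Second, I would compute $\nrig(D)$ explicitly for any rank $1$ de Rham $\pg$-module $D$ in terms of its Hodge-Tate weight, and then apply this to $D=\det_{\calR_L}(M)$.

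For the first equality, recall from the opening paragraph of Subsection \ref{32} that $\nrig(X)$ is characterized, among $\pg$-submodules of $X[1/t]$ with the same $[1/t]$-localization as $X$, by the condition $\ddifn^+(\nrig(X)) = L_n[[t]]\otimes_L \ddr(X)$ for all sufficiently large $n$. Since $\ddifn^+$ and $\ddr$ both commute with the formation of determinants (the former because taking determinants is compatible with base change, the latter because $M$ is de Rham), we compute
\[\ddifn^+(\det_{\calR_L}(N)) \;=\; \det_{L_n[[t]]}\ddifn^+(N) \;=\; L_n[[t]]\otimes_L \det_L\ddr(M) \;=\; L_n[[t]]\otimes_L \ddr(\det_{\calR_L}(M)).\]
This matches the characterizing property of $\nrig(\det_{\calR_L}(M))$, giving $\det_{\calR_L}(N)=\nrig(\det_{\calR_L}(M))$.

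For the second equality, let $D$ be a rank $1$ de Rham $\pg$-module with Hodge-Tate weight $h$, and pick a basis $v$ of $\ddr(D)$. By the definition of the Hodge-Tate filtration in Subsection \ref{21}, $\ddr^{-h}(D)=\ddr(D)$ and $\ddr^{-h+1}(D)=0$, so $v\in t^{-h}\ddifn^+(D)$ but $v\notin t^{-h+1}\ddifn^+(D)$. Hence $t^h v$ is an $L_n[[t]]$-generator of $\ddifn^+(D)$, and so $\ddifn^+(D)=t^h\bigl(L_n[[t]]\cdot v\bigr)=t^h\,\ddifn^+(\nrig(D))$ for $n$ large enough. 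Using the characterization of $\pg$-submodules of $D[1/t]$ in terms of their $\ddifn^+$-lattices (the same description recalled in the proof of Proposition \ref{morph} via \cite{Ber08}), I conclude $D=t^h\nrig(D)$, i.e.\ $\nrig(D)=t^{-h}D$. Applying this to $D=\det_{\calR_L}(M)$, whose Hodge-Tate weight is the sum $h_M$ of the Hodge-Tate weights of $M$ (with multiplicity), yields $\nrig(\det_{\calR_L}(M))=t^{-h_M}\det_{\calR_L}(M)$.

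There is no substantive obstacle: the argument is a direct unwinding of the characterizing property of $\nrig$ and the definition of Hodge-Tate weights. The one point requiring care is the bookkeeping of signs when translating between the filtration $\ddr^\bullet(D)$ and the $L_n[[t]]$-lattice $\ddifn^+(D)$, which pins down the exponent $-h_M$ (as opposed to $+h_M$) of $t$.
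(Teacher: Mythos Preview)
Your proof is correct and follows essentially the same approach as the paper: the paper's proof is a two-sentence sketch invoking precisely the two ingredients you spell out, namely $\det_L(\ddr(M))=\ddr(\det_{\calR_L}(M))$ for the first equality and the rank-one computation $\nrig(D)=t^{-h_D}D$ for the second. You have simply unpacked these facts in more detail than the paper does.
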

\begin{proof}
The first equality follows from $\det_{L}(\ddr(M))= \ddr(\det_{\calR_L}(M))$.
The second one follows from the fact that for a general $1$-dimensional $\pg$-module $D$ corresponding a continuous character $\delta$, we have $ \nrig(D) = t^{-h_D} D$.
This shows the second equality.
\end{proof}

Lemma \ref{lem_keyExp2} justifies the following definition.
\begin{Defi}\label{def_Exp2}
We define the isomorphism
\[\Exp_2(M):\Delta^\Iw_{L,2}(N) \xrightarrow{\sim} \Delta^\Iw_{L,2}(M)\]
as the scalar extension of the isomorphism 
\[ \mathcal{L}_2(N) \xrightarrow{\sim} \mathcal{L}_2(M); \, x \mapsto (-t)^{h_M} x.\]
\end{Defi}

We define the big exponential map of $M$ as follows.
\begin{Defi}\label{def_Exp}
We define the isomorphism 
\[ \Exp(M): \Delta^\Iw_L(N) \xrightarrow{\sim} \Delta^\Iw_L(M) \]
as the product $\Exp(M)\coloneqq \Exp_1(M)\boxtimes\Exp_2(M)$ and call it the big exponential map of $M$.
\end{Defi}

We also define relative big exponential maps, which are useful to prove our main theorem.
\begin{Defi}
Let $M'$ be another $\pg$-module such that $M[1/t] = M'[1/t]$.
We define the isomorphism
\[ \Exp_j(M,M'): \Delta^\Iw_{L,j}(M) \xrightarrow{\sim} \Delta^\Iw_{L,j}(M') \]
as the composition $\Exp_j(M') \circ \Exp_j(M)^{-1}$ for each $j=1,2,\emptyset$.
\end{Defi} 
We note that the definition of $\Exp_j(M,M')$ is justified by the equality $\nrig(M) = \nrig(M')$.

The following proposition is used when we reduce the proof of our main theorem to the generic case.

\begin{Prop}\label{exactness of Exp}
Let $ C^\bullet : 0 \to M_1 \to M_2 \to M_3 \to 0$ be an exact sequence of de Rham $\pg$-modules, and $\nrig(C^\bullet): 0 \to N_1 \to N_2 \to N_3 \to 0$ the exact one corresponding $C^\bullet$ via the functor $\nrig$.
Then we have
\[
\xymatrixcolsep{4pc}
\xymatrix{
\Delta^\Iw_{L,j}(M_1) \boxtimes \Delta^\Iw_{L,j}(M_3) \ar[r]^-{}  \ar@{->}[d]_-{\Exp_j(M_1) \boxtimes \Exp_j(M_3)}  &\Delta^\Iw_{L,j}(M_2) \ar@{->}[d]^-{\Exp_j(M_2)} \\
\Delta^\Iw_{L,j}(N_1) \boxtimes \Delta^\Iw_{L,j}(N_3 ) \ar[r]_-{} & \Delta^\Iw_{L,j}(N_2),
}
\]
for $j=1,2,\emptyset$, where the horizontal isomorphisms are induced by $C^\bullet$ and $\nrig(C^\bullet)$ respectively.
\end{Prop}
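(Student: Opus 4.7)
The claim splits into three statements indexed by $j \in \{1, 2, \emptyset\}$, and the case $j = \emptyset$ follows immediately from the other two by Definition \ref{def_Exp}. I would dispose of $j = 2$ first, as it amounts to bookkeeping: the horizontal isomorphism in the top row is the one induced by the canonical identification $\det_{\calR_L}(M_2) \isom \det_{\calR_L}(M_1) \otimes_{\calR_L} \det_{\calR_L}(M_3)$ coming from $C^\bullet$, and similarly for the $N_i$. Hodge-Tate weights add across short exact sequences of de Rham $\pg$-modules, so $h_{M_2} = h_{M_1} + h_{M_3}$, and Definition \ref{def_Exp2} then reduces the commutativity to the identity $(-t)^{h_{M_1}} \cdot (-t)^{h_{M_3}} = (-t)^{h_{M_2}}$.

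For $j = 1$, I would apply the $\psi$-complex functor to $C^\bullet$ and $\nrig(C^\bullet)$ to produce short exact sequences of perfect complexes of $\dist$-modules, whose images under the determinant functor furnish the horizontal isomorphisms in the diagram. Over $Q(\dist)$ the torsion parts $\Hpsi^1(\bullet)_\tors$ and $\Hpsi^2(\bullet)$ vanish, so the long exact sequences of $\psi$-cohomology collapse to short exact sequences of the form $0 \to \Hpsi^1(M_1)_\free \otimes Q(\dist) \to \Hpsi^1(M_2)_\free \otimes Q(\dist) \to \Hpsi^1(M_3)_\free \otimes Q(\dist) \to 0$, and analogously for the $N_i$. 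Since each $\Exp_{(h)}(M_i)$ is induced by multiplication by the same fixed element $\prod_{k=0}^{h-1}\nabla_k \in \dist$ on the corresponding $\psi$-complex, it is $\dist$-linear and fits into a morphism between these short exact sequences; the standard multiplicativity of determinants on short exact sequences then yields the corresponding multiplicative relation for $\det_\dist \overline{\Exp_{(h)}}$ in $Q(\dist)^\times$. The corrective product $\prod_k \prod_{j_k=1}^{h-h_k-1} \nabla_{h_k+j_k}$ also splits multiplicatively, since the Hodge-Tate weights of $M_2$ with multiplicity are the disjoint union of those of $M_1$ and $M_3$. Together these give commutativity of the diagram at the generic fiber.

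The main obstacle is to upgrade this generic-fiber compatibility to a commutative diagram of integral fundamental lines, that is, to verify that the characteristic-ideal correction $\chara_\dist(\Hpsi^1(\bullet)_\tors)^{-1} \cdot \chara_\dist(\Hpsi^2(\bullet))$ entering the integral refinement of Theorem \ref{delta(D)} combines multiplicatively across $C^\bullet$ and $\nrig(C^\bullet)$. I would derive this from the alternating multiplicativity of $\chara_\dist$ along long exact sequences of finitely generated $\dist$-modules with torsion cohomology in the relevant degrees, applied to the long $\psi$-cohomology sequences attached to $C^\bullet$ and $\nrig(C^\bullet)$; a careful diagram chase then identifies the resulting relation with what the $\delta(D)$-presentation demands. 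Combining with the $j = 2$ case closes the proof for $\Exp(M_i)$.
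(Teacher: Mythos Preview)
Your approach for $j=2$ and the generic-fibre part of $j=1$ is correct and is exactly what the paper's (very terse) proof has in mind: additivity of the Hodge--Tate multiset across $C^\bullet$ gives $h_{M_2}=h_{M_1}+h_{M_3}$, hence multiplicativity of both the common factor $\prod_{k=0}^{h-1}\nabla_k$ and of the corrective product, and since $\Exp_{(h)}$ is multiplication by a fixed element of $\dist$ it is compatible with the short exact sequences of $\psi$-complexes.

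The last paragraph, however, is superfluous and betrays a small confusion. There is no ``main obstacle'' in passing from commutativity over $Q(\dist)$ to commutativity of the integral diagram. All four arrows in the $j=1$ square are already isomorphisms of graded invertible $\dist$-modules: the horizontal ones by construction (determinant of a short exact sequence of perfect complexes), and the vertical ones by Theorem~\ref{delta(D)}. An isomorphism between invertible $\dist$-modules is determined by its base change to the total fraction ring $Q(\dist)$, since each line injects into its localisation. Hence once the square commutes over $Q(\dist)$, it commutes over $\dist$ automatically; no separate multiplicativity check for $\chara_\dist(\Hpsi^1(\bullet)_\tors)$ or $\chara_\dist(\Hpsi^2(\bullet))$ is required. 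The paper's proof accordingly stops at the Hodge--Tate additivity remark.
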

\begin{proof}
The Hodge-Tate weights of $M_2$ is the same as the union of the ones of $M_1$ and $M_3$ with multiplicity, and thus we have $h_{M_2} = h_{M_1} + h_{M_3}$.
This gives the commutativity for each $j=1,2$ by the definition of $\Exp_j$ and so for $j = \emptyset$.
\end{proof}

Big exponential maps are compatible with twists by characters on $\Gamma$ as follows.

\begin{Lemm}\label{Exp and twist universally}
Let $\delta: \Gamma \to L^\times$ be a de Rham character.
Then the diagram
\[
\xymatrixcolsep{4pc}
\xymatrix{
\Delta^\Iw_{L,j} (M) \otimes_{g_\delta} \dist \ar[r]^-{\can_{\delta,j}}  \ar@{->}[d]_-{\Exp_j(M,M') \otimes \id}  &\Delta^\Iw_{L,j} (M(\delta))  \ar@{->}[d]^-{\Exp_j(M(\delta),M'(\delta))} \\
 \Delta^\Iw_{L,j}(M') \otimes_{g_\delta}\dist  \ar[r]_-{\can_{\delta,j}} & \Delta^\Iw_{L,j}(M'(\delta)),
}
\]
commutes for $j = 1,2,\emptyset$.
\end{Lemm}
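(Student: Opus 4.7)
The plan is to reduce the relative case to compatibility of the absolute exponential $\Exp_j(M)$ with twisting, then treat $j=1$ and $j=2$ separately and deduce $j=\emptyset$ by taking the product. First, since $\Exp_j(M,M')\coloneqq\Exp_j(M')\circ\Exp_j(M)^{-1}$ factors through the common middle $\Delta^\Iw_{L,j}(\nrig(M))=\Delta^\Iw_{L,j}(\nrig(M'))$, and since the formation of $\can_{\delta,j}$ is functorial, it suffices to prove that for every de Rham $M$ the analogous square
\[
\xymatrixcolsep{3pc}
\xymatrix{
\Delta^\Iw_{L,j}(\nrig(M))\otimes_{g_\delta}\dist \ar[r]^-{\can_{\delta,j}} \ar[d]_-{\Exp_j(M)\otimes\id} & \Delta^\Iw_{L,j}(\nrig(M)(\delta)) \ar[d]^-{\Exp_j(\nrig(M)(\delta),\,M(\delta))} \\
\Delta^\Iw_{L,j}(M)\otimes_{g_\delta}\dist \ar[r]_-{\can_{\delta,j}} & \Delta^\Iw_{L,j}(M(\delta))
}
\]
commutes (this makes sense since $\nrig(M)[1/t]=M[1/t]$ persists under twisting). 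The original diagram is then recovered by pasting two such squares along this middle line.

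For $j=2$, the map $\Exp_2(M)$ is the scalar extension of $x\mapsto(-t)^{h_M}x$ on $\mathcal{L}_L$ (Definition \ref{def_Exp2}). Writing the de Rham character as $\delta=\chi^k\tilde{\delta}$, we have $h_{M(\delta)}=h_M+kr_M$, while Lemma \ref{lem_keyExp2} applied to $\det_{\calR_L}M$ identifies $\nrig(M)(\delta)$ with $t^{-kr_M}\nrig(M(\delta))$ inside the common $[1/t]$-localization. These contributions match: the two compositions send a basis $x\otimes 1$ of $\mathcal{L}_L(\nrig(M))\otimes_{g_\delta}\dist$ to the same element of $\mathcal{L}_L(M(\delta))$, as is verified by a direct computation on the formal basis vectors $e_{\delta^{r_M}}$.

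For $j=1$, $\Exp_1(M)$ is the determinant-level output, via Theorem \ref{delta(D)}, of
\[
\overline{\Exp}(M)=\bigwedge^r\overline{\Exp_{(h)}}(M)\,\otimes\,\biggl(\prod_{i=1}^{r}\prod_{j_i=1}^{h-h_i-1}\nabla_{h_i+j_i}\biggr)^{-1},
\]
with $\overline{\Exp_{(h)}}(M)$ being multiplication by $\prod_{i=0}^{h-1}\nabla_i$ on $\Hpsi^1$. The key observation is the shift formula: under the canonical identification $\Hpsi^i(M)\otimes_{g_\delta}\dist\isom\Hpsi^i(M(\delta))$ coming from the cyclotomic deformation formalism of \cite{KPX14}, multiplication by $\nabla_j\in\dist$ on the right corresponds to multiplication by $\nabla_{j-k}$ on the left for $\delta=\chi^k\tilde{\delta}$. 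Combined with the Hodge-Tate weight shift $h_i(M(\delta))=h_i+k$, every factor of $\nabla$ in both numerator and denominator shifts consistently; hence $\overline{\Exp}(M(\delta))\circ\can_{\delta,1}=\can_{\delta,1}\circ(\overline{\Exp}(M)\otimes\id)$ at the level of $\psi$-cohomology. Passing to determinants via Theorem \ref{delta(D)} and matching the characteristic ideals of the $\Hpsi^i$-torsion (which transform compatibly under the same shift) then yields the commutativity for $\Exp_1$.

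The principal obstacle is the bookkeeping in the $j=1$ case, namely verifying in full detail the shift $\nabla_j\leftrightarrow\nabla_{j-k}$ on cyclotomic $\psi$-cohomology under $g_\delta$, checking that Theorem \ref{delta(D)} is natural with respect to this identification, and tracking how the characteristic ideals of $\Hpsi^1$-torsion and $\Hpsi^2$ transform. Once these compatibilities are established, the case $j=\emptyset$ follows by taking the $\boxtimes$-product of the diagrams for $j=1$ and $j=2$, thereby completing the proof.
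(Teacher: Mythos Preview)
Your approach is correct and rests on the same key computation as the paper's: the shift formula $g_\delta(\nabla_i)=\nabla_{i+k}$ for $\delta$ of Hodge--Tate weight $k$, together with the corresponding shift of Hodge--Tate weights $h_i(M(\delta))=h_i+k$. The paper's proof is simply this one-line identity (plus the remark that $j=2$ is immediate from the definition), applied directly to the relative map $\Exp_j(M,M')$ since that map is itself a product of $\nabla_i^{\pm 1}$'s; your preliminary reduction through $\nrig(M)$ and the separate discussion of torsion and characteristic ideals are not needed, because the determinant-level map $\Exp_1$ is already globally given by multiplication by a single element of $Q(\dist)$ and the shift formula handles it in one stroke.
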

\begin{proof}
The case $j=2$ can be checked easily by definition.
The case $j=1$ follows from the facts that the first term of a big exponential map are induced by the multiplication of a product of $\nabla^{\pm}_i$ for $i \in \Z$ by definition, and that
\[ g_\delta(\nabla_{i}) = g_\delta \qty( \frac{\log \gamma}{\log \chi(\gamma)} - i) = \frac{\log \delta^{-1}(\gamma)\gamma}{\log \chi(\gamma)} - i = \qty( \frac{\log \gamma}{\log \chi(\gamma)} - k ) - i = \nabla_{i + k}, \]
where $k$ is the Hodge-Tate weight of $\delta$.
\end{proof}

\section{Interpolation formula of $\Exp(M)$ for local $\varepsilon$-isomorphisms}
\label{4}

In this section, we first state the main result and its corollary.
Its proof will be divided into the next three subsections.
We utilize an explicit construction of $\varepsilon^\Iw_L$-isomorphisms for rank $1$ $\pg$-modules, which is one of the main results in \cite{Nak17a}.

\subsection{Statement of main result}
\label{41}

Let $L$ be a finite extension of $\Qp$, $M$ a de-Rham $(\phi,\Gamma)$-module over $\calR_L$, and $N = \nrig(M)$ the $p$-adic differential equation corresponding to $M$.
For any character $\delta: \Gamma \to L^\times$, we denote $\Exp_j(M)_{\delta}$ as the isomorphism commuting the diagram
\[
\xymatrix{
\Delta_{L, j}(N(\delta)) \ar[rr]^-{ \Exp_j(M)_\delta } &&  \Delta_{L, j}(M(\delta))\\
\ar[u]^-{\ev_\delta}  \Delta^\Iw_{L,j}(N) \otimes_{f_\delta} L \ar[rr]_-{\Exp_j(M) \otimes \id } && \Delta^\Iw_{L,j}(M) \otimes_{f_\delta} L \ar[u]_-{\ev_\delta} \\
}
\]
for each $j= 1,2,\emptyset$.

The following is the main theorem of this paper.
\begin{Theo}\label{main}
For any de Rham character $\delta: \Gamma \to L^\times$, the diagram
\[\xymatrix{
& 1_L \ar[dl]_-{\varepsilon^{\dR}_L(N(\delta))}\ar[dr]^-{\varepsilon^{\dR}_L(M(\delta))}\ar@{}[d]|{} & \\
\Delta_{L}(N(\delta)) \ar[rr]_-{ \Exp(M)_\delta}   && \Delta_L(M(\delta))\\
}
\]
commutes.
\end{Theo}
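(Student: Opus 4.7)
The plan is to factor the diagram along $\Delta_L = \Delta_{L,1} \boxtimes \Delta_{L,2}$ and $\Exp(M)_\delta = \Exp_1(M)_\delta \boxtimes \Exp_2(M)_\delta$, verify each factor separately, and reduce the harder $\Delta_{L,1}$-factor to a \emph{generic} case where Theorem \ref{keylemma} applies directly. The $\Delta_{L,2}$-factor should be essentially bookkeeping: since $\nrig(M)$ shares the $\dpst$, hence the Weil--Deligne representation $W(\cdot)$, of $M$, the $\varepsilon$-constants in the definitions of $f_{M(\delta)}$ and $f_{N(\delta)}$ cancel, leaving only the $t$-power discrepancy $t^{h_{M(\delta)} - h_{N(\delta)}} = t^{h_M}$ (using that every Hodge--Tate weight of $N$ is zero). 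This matches, up to sign, the scalar $(-t)^{h_M}$ defining $\Exp_2(M)$ in Definition \ref{def_Exp2}; combined with a check of the $\Gamma$-constant ratio $\Gamma(M(\delta))/\Gamma(N(\delta))$, this closes the $\Delta_{L,2}$-factor.

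For the $\Delta_{L,1}$-factor, I would call $\delta$ \emph{generic} for $M$ if $\Hpg^i(M(\delta)) = \Hpg^i(N(\delta)) = 0$ for $i = 0, 2$ and if every factor $\nabla_{h_i+j_i}$ appearing in the definition of $\overline{\Exp}(M)$ specializes under $f_\delta$ to a non-zero scalar. Under genericity, the Bloch--Kato exact sequences $C^\bullet_1$ and $C^\bullet_2$ used to define $\theta_\dR(M(\delta))$ collapse to their $\exp$/$\exp^*$ pieces composed with the $(1-\phi)$-Euler factors on $\dcris$, and analogously for $N(\delta)$. Since $\Exp_1(M)_\delta$ is, by construction, the specialization of multiplication by a product of $\nabla_i^{\pm 1}$, I plan to apply parts (1) and (2) of Theorem \ref{keylemma} factor by factor to each $\nabla_i$, which identifies $\Exp_1(M)_\delta$ on the nose with the ratio of the $\theta_\dR$'s, yielding commutativity in the generic case.

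To reduce the general case to the generic one, the plan is to twist: given an arbitrary de Rham $\delta$, choose an auxiliary de Rham character $\delta_0$ whose Hodge--Tate weight is far from those of $M$ so that $(M, \delta\delta_0)$ is generic. Lemma \ref{Exp and twist universally} then matches $\Exp(M)_{\delta\delta_0}$ with $\Exp(M(\delta))_{\delta_0}$ via $\can_\delta$, and the analogous twist-compatibility of $\varepsilon^\dR$ inherent in the cyclotomic-deformation formalism transfers the generic statement for $M(\delta)$ at $\delta_0$ back to the required equality at $\delta$. (Alternatively, one could exploit Zariski density of generic characters in $\mathcal{W}$ together with the analytic nature of both sides, but the twisting argument avoids any continuity issue.)

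I expect the main obstacle to be the exact bookkeeping inside the generic case: matching the scalar factors $\nabla_i(\mathbf{1})$ produced by specializing $\Exp_1(M)$ against the simultaneous contributions of signs, Euler factors, Tate-duality normalizations hidden in $C^\bullet_2$, and the $\Gamma$-constants is where everything must line up on the nose. Note that Theorem \ref{keylemma}(3) plays no role in the generic case itself since all relevant $\nabla_i(\mathbf{1})$ are non-zero; however, it is exactly the input one needs to extend the argument when one allows a single $\nabla_i(\mathbf{1}) = 0$, which is where the exponential $\exp_{f,M(\delta)}$ flagged in the introduction enters and provides a uniform interpolation across all de Rham characters.
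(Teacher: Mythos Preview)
Your reduction to the generic case has a genuine gap. Knowing that the diagram commutes at $\delta\delta_0$ tells you nothing about its commutativity at $\delta$: these are two unrelated specializations of $\Delta^\Iw_L$, and there is no mechanism to transport one to the other without already having a universal isomorphism on $\Delta^\Iw_L$---which is precisely the conjectural $\varepsilon^\Iw_L$ you do not yet possess. Lemma~\ref{Exp and twist universally} only matches $\Exp(M)_{\delta\delta_0}$ with $\Exp(M(\delta))_{\delta_0}$; it does not let you descend from $\delta_0$ to the trivial character on $M(\delta)$. Your Zariski-density alternative has the same problem: the quantity you would need to interpolate is $\varepsilon^\dR_L(M(\delta))$ itself, and its analytic variation in $\delta$ is exactly the content of Conjecture~\ref{Conjecture for Dfm}, not an input you may assume. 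The de Rham characters form a discrete set, and nothing forces the ratio $\eta(\delta)\in L^\times$ you would extract to be the specialization of any rigid-analytic function on $\mathcal{W}$.

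The paper's reduction is organized entirely differently and avoids this issue. Genericity is imposed on $M$ rather than on $\delta$ (Definition~\ref{defigen}: $\dcris(M(\delta))=\dcris(M(\delta)^*)=0$ for \emph{all} $\delta$), and the passage from general $M$ to generic $M$ is by induction on $\rank M$: a non-generic $M$ admits a rank-one saturated sub-$\pg$-module coming from a $\phi$-eigenvector in some $\dcris(M(\delta))$, and one applies the multiplicativity of $\Exp$ in short exact sequences (Proposition~\ref{exactness of Exp}) together with the rank-one case proved directly from the explicit construction of $\varepsilon^\Iw_L$ in \cite{Nak17a}. In the generic case, the paper does not compare $N$ to $M$ in one shot but factors through a chain $M\supset M^+\supset\cdots\supset t^hN$, each step changing a single Hodge--Tate weight by $1$ and governed by a single $\nabla_{h_1}$. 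When that weight is $h_1=0$ one has $\nabla_{h_1}(\mathbf{1})=0$, and it is precisely Theorem~\ref{keylemma}(3) that supplies the identity $\nabla_0\circ\exp^{*}=\exp_{M^+}$ needed there; so contrary to your final remark, part~(3) is not a peripheral add-on but is used inside the generic argument itself. Your weaker notion of genericity also does not force $\dcris(M(\delta))=0$, so the Bloch--Kato sequences $C^\bullet_1, C^\bullet_2$ do not collapse as cleanly as you claim, leaving the Euler-factor bookkeeping unresolved.
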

Since de Rham $\varepsilon$-isomorphisms are compatible with base change, a similar statement for any de Rham character $\delta: \Gamma \to \overline{\Q}_p^\times$ is deduced from the above case by enlarging $L$ if necessary.

\begin{Rema}\label{interpolates all the twists}
Since $\varepsilon^\dR$-isomorphisms consist particularly of Bloch-Kato's exponential maps and dual exponential maps, Theorem \ref{main} can be regarded as a generalized interpolation formula of big exponential maps in the context of the local $\varepsilon$-conjecture; our theorem treats general de Rham $\pg$-modules and covers all of the twists by de Rham characters on $\Gamma$, that is, $\chi^k \tilde{\delta}$ for any $k \in \Z$ and any finite character $\tilde{\delta}$.

We also remark that, in a case such as $\dcris(M^*)^{\phi = 1} \neq 0$, our theorem gives a non-trivial information of another exponential map $\exp_{f,M}: \dcris(M) \to \Hpg^1(M)$, by which we can study exceptional zeros of $p$-adic $L$-functions (See \cite{Ben14} for example.)
\end{Rema}

The following corollary is an important consequence.
\begin{Coro}\label{main'}
The existence of $\varepsilon^{\Iw}_L(M)$ is equivalent to that of $\varepsilon^{\Iw}_L(N)$ for $N=\nrig(M)$. 
More precisely, 
if one of them exists, then the other one also exists and we have the following commutative diagram:
\[\xymatrix{
& 1_{\dist}\ar[dl]_-{\varepsilon^{\Iw}_L(N)}\ar[dr]^-{\varepsilon^{\Iw}_L(M)}\ar@{}[d]|{}&\\
\Delta^\Iw_L(N)  \ar[rr]_-{ \Exp(M) }&& \Delta^\Iw_L(M).
}
\]
\end{Coro}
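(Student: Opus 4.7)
The plan is to take the last statement of Theorem \ref{main} at its word and bootstrap it from the pointwise interpolation formula (at every de Rham character) to the universal one. Since $\Exp(M)$ is an isomorphism of $\dist$-modules and composition with it is invertible, the two directions of the equivalence are symmetric, so I shall only construct $\varepsilon^{\Iw}_L(M)$ from $\varepsilon^{\Iw}_L(N)$.

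Assume $\varepsilon^{\Iw}_L(N)$ exists. I would simply \emph{define}
\[ \varepsilon^{\Iw}_L(M) \coloneqq \Exp(M) \circ \varepsilon^{\Iw}_L(N) : 1_{\dist} \xrightarrow{\sim} \Delta^{\Iw}_L(M), \]
which makes the triangle in the corollary commute tautologically. The only real content is to check that this candidate satisfies the interpolation property of Conjecture \ref{Conjecture for Dfm}. Fix a de Rham character $\delta: \Gamma \to L^\times$ and stack the following three commutative diagrams (all linked by the specialization map $\ev_\delta$): first, the interpolation square for $\varepsilon^{\Iw}_L(N)$ at $\delta$, whose bottom map is $\varepsilon^{\dR}_L(N(\delta))$; second, the defining square for $\Exp(M)_\delta$ that relates $\Exp(M)\otimes\id$ to $\Exp(M)_\delta$ via $\ev_\delta$ on both sides; and third, the triangle of Theorem \ref{main}, which reads $\Exp(M)_\delta \circ \varepsilon^{\dR}_L(N(\delta)) = \varepsilon^{\dR}_L(M(\delta))$. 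Concatenating these identifies $\ev_\delta \circ (\varepsilon^{\Iw}_L(M) \otimes \id)$ with $\varepsilon^{\dR}_L(M(\delta)) \circ \can$, which is exactly the desired interpolation at $\delta$.

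For the converse direction, set $\varepsilon^{\Iw}_L(N) \coloneqq \Exp(M)^{-1} \circ \varepsilon^{\Iw}_L(M)$ and run the same diagram chase in reverse. Uniqueness of each $\varepsilon^{\Iw}$-isomorphism is guaranteed, as observed right after Conjecture \ref{Conjecture for Dfm}, by the Zariski density of de Rham characters in the weight space $\mathcal{W}$; this uniqueness in particular shows that the two possible constructions above agree with whatever $\varepsilon^{\Iw}_L(M)$ or $\varepsilon^{\Iw}_L(N)$ is posited to exist, so the commutative triangle displayed in the corollary holds unconditionally.

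I do not anticipate any genuine obstacle: Theorem \ref{main} has already absorbed all of the analytic difficulty by checking the interpolation at every single de Rham specialization, and the remaining work is purely formal diagram-pasting. The one point where care is needed is ensuring that the $\ev_\delta$ appearing in the three stacked diagrams is the same canonical specialization on $\Delta^{\Iw}_{L,1}$ and $\Delta^{\Iw}_{L,2}$ separately, so that the boxed product $\Exp(M) = \Exp_1(M) \boxtimes \Exp_2(M)$ specializes componentwise; this is built into the definitions in Section \ref{32} and the compatibility noted in Lemma \ref{Exp and twist universally}.
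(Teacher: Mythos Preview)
Your argument is correct and is essentially identical to the paper's own proof: both define the missing $\varepsilon^{\Iw}_L$ as the composite with $\Exp(M)^{\pm 1}$ and then verify the interpolation property of Conjecture \ref{Conjecture for Dfm} at each de Rham $\delta$ by invoking Theorem \ref{main}. Your version simply spells out the diagram chase in slightly more detail.
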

\begin{proof}
If $\varepsilon^{\Iw}_L(N)$ (resp. $\varepsilon^{\Iw}_L(M)$) exists, then we define $\varepsilon^{\Iw}_L(M)$ (resp. $\varepsilon^{\Iw}_L(N)$) by 
$$\varepsilon^{\Iw}_L(M) : =\Exp(M) \circ \varepsilon^{\Iw}_L(N) \ \ (\text{resp}. \varepsilon^{\Iw}_L(N) : =\Exp(M)^{-1} \circ \varepsilon^{\Iw}_L(M)).$$
Since the isomorphism $\varepsilon^{\Iw}_L(N)$ (resp. $\varepsilon^{\Iw}_L(M)$) satisfies the commutative diagram in Conjecture \ref{Conjecture for Dfm} for arbitrary 
de Rham character $\delta$ by assumption, the isomorphism $\varepsilon^{\Iw}_L(M)$ (resp. $\varepsilon^{\Iw}_L(N)$) also satisfies 
the commutative diagram for arbitrary de Rham $\delta$ (in Conjecture) by Theorem \ref{main}, which shows that 
$\varepsilon^{\Iw}_L(M)$ (resp. $\varepsilon^{\Iw}_L(N)$) satisfies the conjecture. 
\end{proof}

By this corollary, the conjecture for all the de Rham $(\phi,\Gamma)$-modules is reduced to that for de Rham $(\phi,\Gamma)$-modules with a structure of $p$-adic differential equation (equivalently, de Rham $(\phi,\Gamma)$-modules with all Hodge-Tate weights $0$). 
This equivalence was in fact effectively used to prove the conjecture for rank $1$ case in \cite{Nak17a} (see also Remark \ref{rank 1 and conjecture}).

\begin{Rema}\label{main for cris}
Assume that $M$ is crystalline.
We remark that Theorem \ref{main'} gives an alternative construction of $\varepsilon^\Iw_L(M)$ (cf. \cite{Nak17a}).
In this case, the canonical map
\[ \calR_L \otimes_L \dcris(M) \to N \]
is an isomorphism as $\pg$-modules, and we can easily construct $\varepsilon^\Iw_L(N)$; its scalar extension with respect to the canonical homomorphism $\dist \to \calR_L(\Gamma)$ is induced by a composition of isomorphisms
\begin{multline*}
\calR_L(\Gamma) \otimes_L \dcris(M) \cong \calR_L^{\psi=0} \otimes_L \dcris(M)  \\ \cong \qty(\calR_L \otimes_L \dcris(M))^{\psi=0} \cong N^{\psi=0} \cong N^{\psi=1} \otimes_\dist \calR_L(\Gamma),
\end{multline*}
where the first isomorphism is obtained by the one
\[ \calR_L(\Gamma) \cong (\calR_L)^{\psi=0}; \lambda \mapsto \lambda \qty( (1+T)^{-1}), \]
and the last isomorphism is obtained by the map
\[ 1-\phi: N^{\psi=1} \to N^{\psi=0}. \]
Thus, using Theorem \ref{main'}, we obtain another construction of $\varepsilon^\Iw_L(M)$. 
\end{Rema}

Before proving the main theorem, we shall state an equivalent version of Theorem \ref{main}.
As before, let $L$ be a finite extension of $\Qp$ and $M,M'$ de-Rham $(\phi,\Gamma)$-modules over $\calR_L$ with $M[1/t] = M'[1/t]$.
For any character $\delta: \Gamma \to L^\times$, we denote $\Exp_j(M,M')_{\delta}$ as the isomorphism commuting the diagram
\[
\xymatrix{
\Delta_{L, j}(M(\delta)) \ar[rr]^-{ \Exp_j(M,M')_\delta } &&  \Delta_{L, j}(M'(\delta))\\
\ar[u]^-{\ev_\delta}  \Delta^\Iw_{L,j}(M) \otimes_{f_\delta} L \ar[rr]_-{\Exp_j(M,M') \otimes \id} && \Delta^\Iw_{L,j}(M') \otimes_{f_\delta} \ar[u]_-{\ev_\delta} \\
}
\]
for each $j= 1,2,\emptyset$.
\begin{Theo}\label{main relative}
For any de Rham character $\delta: \Gamma \to L^\times$, the diagram
\[\xymatrix{
& 1_L \ar[dl]_-{\varepsilon^{\dR}_L(M(\delta))}\ar[dr]^-{\varepsilon^{\dR}_L(M'(\delta))}\ar@{}[d]|{} & \\
\Delta_{L}(M(\delta)) \ar[rr]_-{ \Exp(M,M')_\delta}   && \Delta_L(M'(\delta))\\
}
\]
commutes.
\end{Theo}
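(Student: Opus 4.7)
I would prove Theorem \ref{main relative} directly, since it contains Theorem \ref{main} as essentially the special case $M' = \nrig(M)$ (after a straightforward manipulation relating $\Exp(M, M')$, $\Exp(M)$, and $\Exp(M')$). Following the scheme announced in the introduction, the argument proceeds in two stages: reduce the general case to a \emph{generic} one, then prove the generic case via Theorem \ref{keylemma}.

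\textbf{Reduction to the generic case.} Lemma \ref{Exp and twist universally}, together with the base change compatibility of $\varepsilon^{\dR}$-isomorphisms under twist, allows me to replace the pair $(M, M')$ at character $\delta$ by the pair $(M(\delta), M'(\delta))$ at the trivial character; this lets me freely twist by any de Rham character to adjust the Hodge–Tate weights. Proposition \ref{exactness of Exp}, combined with the multiplicativity of $\theta_{\dR}$ and of $f_{(\cdot)}$ in short exact sequences, then permits a dévissage with respect to a filtration of $M$ (and the parallel filtration of $M'$). Combining these two reductions I may assume I am in a \emph{generic} situation in which the Bloch–Kato exact sequences $C^\bullet_1(M), C^\bullet_2(M)$ defining $\theta_\dR(M)$ degenerate into their simplest shape (for example, $\dcris(M) = \dcris(M^*) = 0$ and $\exp_M \colon t(M) \isom \Hpg^1(M)$ is an isomorphism, and analogously for $M'$), and in which $\Exp(M, M')$ is realized by multiplication by an explicit distribution $\lambda \in \dist$ acting on $C^\bullet_\psi$ in the sense of Corollary \ref{cpxmorph}.

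\textbf{Generic case.} Here Theorem \ref{keylemma} applies directly. Parts (1) and (2) show that $\times \lambda$ intertwines $\exp_M$ with $\exp_{M'}$, and $\exp^*_M$ with $\exp^*_{M'}$, up to the scalar $\lambda(\mathbf{1}) \in L$; part (3) covers the exceptional case $\lambda(\mathbf{1}) = 0$ via the derivative $\frac{d\lambda}{\omega}(\mathbf{1})$. Substituting these identities into the diagram defining $\varepsilon^{\dR}_L = (\id \boxtimes f_{(\cdot)}^{-1}) \circ (\Gamma(\cdot)\,\theta_{\dR})$, the commutativity of the triangle of Theorem \ref{main relative} reduces to matching $\lambda(\mathbf{1})$ (or $\frac{d\lambda}{\omega}(\mathbf{1})$) with the ratio of the normalizing data $\Gamma(M)/\Gamma(M')$, $\varepsilon_L(W(M))/\varepsilon_L(W(M'))$, and the power $(-t)^{h_{M'}-h_M}$ arising from Definition \ref{def_Exp2} and Lemma \ref{lem_keyExp2}.

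\textbf{Main obstacle.} I expect the main difficulty to lie in this last constant matching: the distribution $\lambda$ inducing $\Exp(M, M')$ is an explicit quotient of products of $\nabla_i$, and I need to identify its specialization (or its $\omega$-derivative, when the specialization vanishes) at an arbitrary de Rham character with the precise product of $\Gamma^*$-values, $\varepsilon_L$-constants, and powers of $t$ appearing above. Carrying this out while correctly accommodating the index shifts of the $\nabla_i$ under twist by $\delta$ (Lemma \ref{Exp and twist universally}) and the parallel shift of Hodge–Tate weights in the normalizing data is essentially the rank-$1$ computation of \cite[\S4]{Nak17a} imported into the general big-exponential framework of \cite{Nak14}.
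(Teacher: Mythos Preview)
Your high-level plan (reduce to a generic situation, then apply Theorem \ref{keylemma}) matches the paper, but your reduction step has a genuine gap. The d\'evissage via Proposition \ref{exactness of Exp} produces rank $1$ subquotients, and a rank $1$ de Rham $\pg$-module is \emph{never} generic in the sense of Definition \ref{defigen}: for any such module some character twist is crystalline, so $\dcris$ of some twist is nonzero. Thus your induction cannot bottom out in generic pieces. The paper closes this gap by first proving the rank $1$ case separately (Theorem \ref{main rank 1}), not via Theorem \ref{keylemma} at all but by invoking the explicit construction of $\varepsilon^\Iw_L$ for rank $1$ modules from \cite{Nak17a} and checking directly that $\Exp(M,tM)$ matches it. Only then does the inductive step of Proposition \ref{reducetogeneric} (split off a rank $1$ piece if $M$ is not generic) go through.

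Your description of the generic endgame is also more complicated than what is actually needed. The paper does not compare $M$ and $M'$ directly with a complicated quotient of products of $\nabla_i$'s; instead it reduces to a single elementary step $M \mapsto M^+$ shifting exactly one Hodge--Tate weight $h_1$ by $1$, so that $\lambda = \nabla_{h_1}$. Then $\varepsilon_L(W(M)) = \varepsilon_L(W(M^+))$ since the underlying Weil--Deligne representations coincide, the $\Exp_2$-part contributes a factor $(-1)$ (Lemma \ref{Second half: Exp_2 and theta_dR}), and the remaining constant to match is just $\Gamma(M)/\Gamma(M^+) = \Gamma^*(h_1)/\Gamma^*(h_1+1)$, i.e.\ $h_1^{-1}$ when $h_1 \neq 0$ (handled by parts (1)--(2) of Theorem \ref{keylemma} since $\nabla_{h_1}(\mathbf{1}) = -h_1$) and $1$ when $h_1 = 0$ (handled by part (3) since then $\nabla_0(\mathbf{1}) = 0$ and $\tfrac{d\nabla_0}{\omega}(\mathbf{1}) = 1$). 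So the ``main obstacle'' you anticipate largely evaporates once the one-step reduction and the rank $1$ base case are in place.
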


We shall prove Theorem \ref{main} in the rest of the paper as follows.
In the subsection \ref{42}, we prove Theorem \ref{main} for rank $1$ $\pg$-modules.
In the subsection \ref{43}, we introduce a special class of $\pg$-modules called generic, and reduce the proof of \ref{main} for general $\pg$-modules to that for generic ones using the result for rank $1$ case.
In the final subsection \ref{44}, we complete the proof of Theorem \ref{main} by proving Theorem \ref{main relative} for generic $\pg$-modules.

\subsection{Proof for rank one case}\label{42}

We prove Theorem \ref{main} when $M$ is of rank $1$.
We utilize the explicit construction of $\varepsilon^\Iw_L(M)$ obtained in \cite{Nak17a}.

\begin{Theo}\label{main rank 1}
When $M$ is of rank $1$, the diagram of Theorem \ref{main} commutes.
\end{Theo}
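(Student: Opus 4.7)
The plan is to reduce Theorem \ref{main rank 1} to a single Iwasawa-level identity and then apply the explicit construction of $\varepsilon^\Iw_L$ for rank $1$ $\pg$-modules obtained in \cite{Nak17a}. Since $M$ is rank $1$ and de Rham, both $\varepsilon^\Iw_L(M)$ and $\varepsilon^\Iw_L(N)$ are known to exist. By the Zariski density of de Rham characters in $\mathcal{W}$, combined with the very definition of $\Exp(M)_\delta$ as the specialization of $\Exp(M)$ along $\ev_\delta$, Theorem \ref{main rank 1} for all de Rham characters $\delta$ is equivalent to the single Iwasawa-level equality
\[\varepsilon^\Iw_L(M) = \Exp(M) \circ \varepsilon^\Iw_L(N)\]
as isomorphisms $1_\dist \isom \Delta^\Iw_L(M)$, so it suffices to prove this identity.

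First I would write $M = \calR_L(\delta_M)$ and, by Lemma \ref{lem_keyExp2}, $N = t^{-h_M} M = \calR_L(\delta_M \chi^{-h_M})$, where $h_M$ is the Hodge-Tate weight of $M$. Under this identification, $\Exp(M) = \Exp_1(M) \boxtimes \Exp_2(M)$ unfolds transparently: $\Exp_2(M)$ is the scalar extension of $x \mapsto (-t)^{h_M} x$ on $\mathcal{L}_L$ by Definition \ref{def_Exp2}, while $\Exp_1(M)$ is induced, through Theorem \ref{delta(D)}, by multiplication by $\prod_{i=0}^{h_M-1} \nabla_i$ on $\Hpsi^1$. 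The construction of $\varepsilon^\Iw_L$ for rank $1$ in \cite{Nak17a} proceeds by first constructing $\varepsilon^\Iw_L(N)$ directly for a rank $1$ $N$ of Hodge-Tate weight zero, extending Kato's Coleman-based construction to arbitrary such objects, and then defining $\varepsilon^\Iw_L(M)$ by transferring it through a big-exponential-type identification $\Delta^\Iw_L(N) \isom \Delta^\Iw_L(M)$ built from exactly these factors.

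The proof therefore reduces to identifying the transfer isomorphism implicit in \cite{Nak17a} with $\Exp(M)$ from Definition \ref{def_Exp}. The main obstacle is the bookkeeping of normalization constants: the $\Gamma$-factor ratio $\Gamma(M)/\Gamma(N) = \Gamma^*(h_M)^{-1}$, the $\varepsilon$-constant ratio $\varepsilon_L(W(M))/\varepsilon_L(W(N))$ comparing $\varepsilon^\dR_L(N(\delta))$ with $\varepsilon^\dR_L(M(\delta))$, the scalar $(-t)^{h_M}$ appearing in $\Exp_2(M)$, and the distribution $\prod_{i=0}^{h_M-1}\nabla_i$ acting via $\Exp_1(M)$. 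The crucial input is Theorem \ref{keylemma}: specializing $\prod_{i=0}^{h_M-1}\nabla_i$ at a de Rham character $\delta$ and invoking parts (1)--(3) of Theorem \ref{keylemma} shows that the effect of $\Exp_1(M)_\delta$ on Bloch-Kato's exponential and dual exponential maps precisely matches the Bloch-Kato maps of the $t^{h_M}$-twist, while the $\Gamma$-factors and $\varepsilon$-constants cancel because they depend on $h_M$ in exactly the right way. Once these identifications are in place, the remaining verification is a direct comparison of two finite-dimensional $L$-linear isomorphisms.
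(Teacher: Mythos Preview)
Your opening reduction to the Iwasawa-level identity $\varepsilon^\Iw_L(M) = \Exp(M) \circ \varepsilon^\Iw_L(N)$ is correct and is exactly what the paper does. After that point, however, your proposal takes a wrong turn.

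The paper's proof is a direct and very short computation. Since $N = t^{-h_M}M$ and the relative exponentials are transitive, it suffices to treat the single step $M \rightsquigarrow tM$. The explicit construction in \cite{Nak17a} (Section 4A) describes $\varepsilon^\Iw_L(D)\otimes_{\dist}\calR_L(\Gamma)$ for rank $1$ $D$ via the Coleman-type map $\lambda\otimes e_{\delta_D}\mapsto \lambda((1+X)^{-1}e_{\delta_D})$ together with $1-\phi$. One then simply computes
\[
\nabla_{h_M}\bigl(\lambda((1+X)^{-1}e_{\delta_M})\bigr) = -\lambda\bigl((1+X)^{-1}\, t\, e_{\delta_M}\bigr),
\]
which matches $\Exp_2(M,tM)(e_{\delta_M})=-t\,e_{\delta_M}$ on the nose. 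No $\Gamma$-factors, no $\varepsilon$-constants, no Theorem \ref{keylemma}.

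Your proposed verification via Theorem \ref{keylemma} and the bookkeeping of $\Gamma^*(h_M)$ and $\varepsilon_L(W(M))$ is problematic for two reasons. First, those constants live in the \emph{specialized} isomorphisms $\varepsilon^\dR_L$; they do not appear in $\varepsilon^\Iw_L$, so once you have passed to the Iwasawa level they should not re-enter the argument. Invoking Theorem \ref{keylemma} at a fixed $\delta$ after your Zariski-density reduction is circular: you would be checking the very specialized statements you claimed were equivalent to the Iwasawa identity. Second, even if you tried to run the $\varepsilon^\dR$-level comparison directly (bypassing the Iwasawa reduction), rank $1$ de Rham modules are \emph{never} generic --- some $\dcris(M(\delta))$ is nonzero --- so the clean picture of Section \ref{44} does not apply: the $\dcris$-terms in the exact sequences $C^\bullet_1(M)$, $C^\bullet_2(M)$ defining $\theta_\dR$ do not vanish, and your sketch does not explain how to handle them. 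The paper's induction deliberately uses the rank $1$ case as the base precisely to avoid this difficulty. What is actually needed is the explicit Coleman-type formula from \cite{Nak17a}, and with it the proof is a three-line calculation.
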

\begin{proof}
By Theorem 3.11 of \cite{Nak17a}, the isomorphisms $\varepsilon^\Iw_{L}(M)$ and $\varepsilon^\Iw_L(N)$ exist.
Moreover, since we have $N = t^{-h_M}M$, it suffices to show that the diagram
\[\xymatrix{
& 1_{\dist}\ar[dl]_-{\varepsilon^{\Iw}_L(M)}\ar[dr]^-{\varepsilon^{\Iw}_L(tM)}\ar@{}[d]|{}&\\
\Delta^\Iw_L(M)  \ar[rr]_-{ \Exp(M,tM) }&& \Delta^\Iw_L(tM)
}
\]
commutes.

By the explicit construction in Section 4A of \cite{Nak17a}, for a general $\pg$-module $D$ of rank $1$, the isomorphism
\[ \varepsilon^\Iw_L(D) \otimes_{\dist} \id_{\calR_L(\Gamma)}: 1_{\calR_L(\Gamma)} \isom \Delta^\Iw_L(D) \otimes_{\dist}  {\calR_L(\Gamma)} \]
are obtained by the isomorphisms
\[ \theta_1 = 1-\phi: \Delta^\Iw_{L,1}(M) \otimes_\dist \calR_L(\Gamma) \cong \qty((\calR_L e_\delta)^{\psi=0},1)^{-1},\]
\[ \theta_2: \calR_L(\Gamma) \otimes_L L e_{\delta_D} \isom (\calR_L e_\delta)^{\psi=0}; \lambda \otimes e_{\delta_D} \mapsto \lambda( (1+X)^{-1} e_{\delta_D}),\]
where we put $\delta_D: \Qp^\times \to L^\times$ as the character corresponding to $D$.
Since $\Exp_1(M,tM)$ is induced by multiplying $\nabla_{h_M}$, and we can calculate
\begin{align*}
\nabla_{h_M}(\lambda( (1+X)^{-1} e_{\delta_M})) &= \lambda ((\nabla_0( (1+X)^{-1} e_{\delta_M})) -h_M (1+X)^{-1} e_{\delta_M}) \\
&= \lambda( -t(1+X)^{-1} e_{\delta_M} + (1+X)^{-1} (h_M e_{\delta_M}) -h_M (1+X)^{-1} e_{\delta_M}) \\
&= - \lambda ( (1+X)^{-1} te_{\delta_M}),
\end{align*}
our assertion follows from the equality $\Exp_2(M,tM)(e_{\delta_M})= - te_{\delta_M}$.
\end{proof}

\begin{Rema}\label{rank 1 and conjecture}
Theorem \ref{main rank 1} shows that our main theorem exactly generalizes the Proposition 4.13 in \cite{Nak17a}, which is proved in a different way and used in the proof of the local $\varepsilon$-conjecture for rank $1$ $\pg$-modules.
\end{Rema}

\subsection{Reduction to generic case}
\label{43}
In this subsection, we define genericity of a $(\phi,\Gamma)$-module and reduce the proof of our main theorem for the general case to that the generic case. 

\begin{Defi}\label{defigen}
A $(\phi, \Gamma)$-module $D$ over $\calR_L$ is generic if for any character $\delta: \Gamma \to \overline{\Q}_p^\times$ we have $\dcris(D(\delta))=0$ and $\dcris(D(\delta)^*)=0.$
\end{Defi}

For general $\pg$-modules $D, D'$ with $D[1/t] = D'[1/t]$, $D$ is generic if and only if $D'$ is generic by definition of $\dcris$.
In particular, if a generic $(\phi, \Gamma)$-module $M$ is de Rham, then the attached $p$-adic differential equation $N=\nrig(M)$ is also generic.

\begin{Lemm}\label{lemma:vanishing H2 when generic}
Let $D$ be a generic $(\phi, \Gamma)$-module over $\calR_L$ of rank $r$. 
Then we have $\Hpsi^2(D)=\Hpsi^2(D^*)=0$.
\end{Lemm}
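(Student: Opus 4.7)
My plan is to reduce the vanishing of $\Hpsi^2(D)$ to a pointwise computation of $\Hpg^2(D(\delta))$ for every continuous character $\delta$ of $\Gamma$, and then to combine Tate local duality with the genericity hypothesis to carry out that computation.

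First, by \cite{KPX14} the complex $C^\bullet_\psi(D)$ is a perfect complex of $\dist$-modules, and it computes the $(\phi,\gamma)$-cohomology of the cyclotomic deformation $\Dfm(D)$. Derived base change along $f_\delta\colon \dist\to L'$ (for $\delta\colon\Gamma\to L'^\times$ with $L'/L$ a suitable finite extension) identifies $C^\bullet_{\phi,\gamma}(\Dfm(D))\otimes^{\mathbf{L}}_{f_\delta}L'$ with $C^\bullet_{\phi,\gamma}(D(\delta))$. Since formation of the top cohomology commutes with any right-exact operation, one obtains a canonical isomorphism
\[\Hpsi^2(D)\otimes_{f_\delta}L'\ \cong\ \Hpg^2(D(\delta)).\]

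Second, Tate local duality for $(\phi,\Gamma)$-modules over $\calR_L$ gives $\Hpg^2(D(\delta))\cong \Hpg^0(D(\delta)^*)^{\vee}=\Hpg^0(D^*(\delta^{-1}))^{\vee}$. Any element of $\Hpg^0(E)=E^{\phi=1,\Gamma=1}$ lies in $(E[1/t])^{\Gamma}=\dcris(E)$, so $\Hpg^0(E)\subseteq \dcris(E)$; and by the genericity of $D$, $\dcris(D^*(\delta^{-1}))=0$ for every $\delta$. Combining, $\Hpg^2(D(\delta))=0$ for every continuous character $\delta$, so the fibre of $\Hpsi^2(D)$ at every classical point of $\mathcal{W}$ vanishes. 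Because $\Hpsi^2(D)$ is finitely generated over $\dist$ as the cohomology of a perfect complex and $\mathcal{W}$ is Stein, a stalkwise application of Nakayama's lemma forces the associated coherent sheaf, and hence $\Hpsi^2(D)$ itself, to be zero.

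For the statement about $\Hpsi^2(D^*)$, I first observe that $D^*$ is itself generic: the identity $(D^*(\delta))^*=D(\delta^{-1})$ combined with the genericity of $D$ gives both $\dcris(D^*(\delta))=\dcris(D(\delta^{-1})^*)=0$ and $\dcris((D^*(\delta))^*)=\dcris(D(\delta^{-1}))=0$ for every $\delta$. Applying the same three-step argument to $D^*$ in place of $D$ yields $\Hpsi^2(D^*)=0$. The main technical point, in my view, is the rigorous identification $\Hpsi^2(D)\otimes_{f_\delta}L'\cong \Hpg^2(D(\delta))$ via the Kedlaya--Pottharst--Xiao comparison and derived base change; once this is granted together with Tate duality, the remainder of the argument is essentially formal.
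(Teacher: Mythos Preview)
Your proof is correct and follows essentially the same approach as the paper: show $\Hpg^2(D(\delta))=0$ for all $\delta$ via Tate duality and the inclusion $\Hpg^0\subseteq\dcris$, then use the base-change identification of $\Hpsi^2(D)\otimes_{f_\delta}L'$ with $\Hpg^2(D(\delta))$ and a Nakayama argument to conclude. The only cosmetic differences are that the paper phrases the final step in terms of $\Hpsi^2(D)$ being a torsion coadmissible $\dist$-module (rather than ``finitely generated''---note $\dist$ is not noetherian, so your phrasing is slightly loose, though the coherent-sheaf reasoning you give is the right one), and that the paper leaves the $D^*$ case implicit by symmetry of the genericity definition, whereas you spell it out.
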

\begin{proof}
We shall show $\Hpsi^2(D) = 0$.
For any continuous character $\delta: \Gamma \to \overline{\Q}_p^\times$, we have $\Hpg^0(D(\delta)^*) \subseteq (D(\delta)^*)^\Gamma \subseteq \dcris(D(\delta)^*) = 0$, and the Tate duality gives $\Hpg^2(D(\delta))=0$. 
On the other hand, we have $\Hpsi^2(D) \otimes_\dist (\dist/\mathfrak{m}_\delta) \cong D^\Delta/(\psi-1,\mathfrak{m}_\delta) \cong \Hpg^2(D(\delta^{-1})) = 0$ where $\mathfrak{m}_\delta \subseteq \dist$ is the corresponding maximal ideal. 
Since the set $\{\mathfrak{m}_{\delta}\}_{\delta : \Gamma \to \overline{\Q}_p^\times}$ coincides with the set of all the maximal ideals of $\dist$ and $\Hpsi^2(D)$ is a torsion coadmissible $\dist$-module, it gives $\Hpsi^2(D)=0$.
\end{proof}

The cohomologies of a $\pg$-module whose second $\psi$-cohomology and that of its dual vanish are quite simple.
\begin{Lemm}\label{lemma: generic treats easily}
Let $D$ be a $(\phi, \Gamma)$-module over $\calR_L$ of rank $r$ such that $\Hpsi^2(D)=\Hpsi^2(D^*)=0$.
Then one has $\Hpg^i(D)=0$ for $i=0,2$, and $\dim_L(\Hpg^1(D)) = r$.
Also, the first $\psi$-cohomology $\Hpsi^1(D)$ is free of rank $r$ over $R_L^+(\Gamma)$ and for any continuous character $\delta: \Gamma \to L^\times$, one has a canonical isomorphism $\Hpsi^1(D) \otimes_{f_\delta} L \cong \Hpg^1(M(\delta))$.
\end{Lemm}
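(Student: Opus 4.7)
My plan is to establish the four assertions in sequence, with later steps leveraging the earlier ones.

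\emph{Vanishing of $\Hpg^0(D)$ and $\Hpg^2(D)$.} The argument displayed in the proof of Lemma \ref{lemma:vanishing H2 when generic} already records the isomorphism $\Hpsi^2(D) \otimes_\dist \dist/\mathfrak{m}_{\mathbf{1}} \cong \Hpg^2(D)$, where $\mathfrak{m}_{\mathbf{1}}$ is the maximal ideal corresponding to the trivial character. The hypothesis $\Hpsi^2(D) = 0$ therefore forces $\Hpg^2(D) = 0$, and applying the same reasoning to $D^*$ combined with Tate local duality for $(\phi,\Gamma)$-modules yields $\Hpg^0(D) \cong \Hpg^2(D^*)^{\vee} = 0$. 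The Euler--Poincar\'e formula for $(\phi,\Gamma)$-modules over $\calR_L$, namely $\sum_i (-1)^i \dim_L \Hpg^i(D) = -r$, then pins down $\dim_L \Hpg^1(D) = r$.

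\emph{Freeness of $\Hpsi^1(D)$.} To show $\Hpsi^1(D)$ is free of finite rank over $\dist$, I would exploit the perfectness of $C^\bullet_\psi(D)$ recalled from \cite{KPX14}. Representing this complex by $[P^1 \to P^2]$ with the $P^i$ finitely generated projective in degrees $[1,2]$, the vanishing $\Hpsi^2(D) = 0$ makes the differential surjective, so the surjection splits and $\Hpsi^1(D)$ becomes a direct summand of $P^1$. Hence $\Hpsi^1(D)$ is finitely generated projective over $\dist$. Because the weight space $\mathcal{W}$ is a finite disjoint union of open unit disks and on each disk the ring of analytic functions is B\'ezout, every finitely generated projective $\dist$-module is in fact free.

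\emph{Specialization and rank.} For any continuous character $\delta : \Gamma \to L^\times$, the flatness of $\Hpsi^1(D)$ together with $\Hpsi^2(D) = 0$ implies that the derived tensor product $C^\bullet_\psi(D) \otimes^{L}_{\dist} L_\delta$, with $L_\delta = \dist \otimes_{f_\delta} L$, collapses to $\Hpsi^1(D) \otimes_{f_\delta} L$ concentrated in a single cohomological degree. Using the Koszul resolution of $L_\delta$ over $\dist$ together with Herr's quasi-isomorphism $C^\bullet_{\psi,\gamma}(D(\delta)) \simeq C^\bullet_{\phi,\gamma}(D(\delta))$, this derived tensor product is identified with $C^\bullet_{\phi,\gamma}(D(\delta))$, yielding the canonical isomorphism $\Hpsi^1(D) \otimes_{f_\delta} L \cong \Hpg^1(D(\delta))$. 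Applying this at the trivial character and invoking the first paragraph determines the rank of the free $\dist$-module $\Hpsi^1(D)$ to be $r$.

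The main technical point is the freeness step, where one must combine perfectness of $C^\bullet_\psi(D)$ with the specific ring-theoretic structure of $\dist$ (namely that each connected component of $\mathcal{W}$ has B\'ezout coordinate ring) to upgrade finitely generated projective to free; the other steps are essentially formal consequences of standard machinery---Euler--Poincar\'e, Tate duality, and Herr's isomorphism---developed in the references already cited in the paper.
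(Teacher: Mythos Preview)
Your proof is correct and spells out precisely the standard argument that the paper defers to \cite[Section 5]{KPX14}; the paper's own proof consists solely of that citation, and your sequence (specialize $\Hpsi^2$ to get $\Hpg^2=0$, dualize for $\Hpg^0$, use Euler--Poincar\'e for $\Hpg^1$, extract projectivity from perfectness plus the vanishing of $\Hpsi^2$, upgrade to freeness via the B\'ezout property of each component of $\dist$, and identify the specialization via the Koszul resolution and the $\psi$--$\phi$ comparison) is exactly what that reference supplies. The only point worth a word of care is the representation of $C^\bullet_\psi(D)$ by a two-term complex of finite projectives in degrees $[1,2]$, which needs the Tor-amplitude bound and not merely perfectness; this is however part of what \cite{KPX14} establishes, so your invocation is legitimate.
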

\begin{proof}
For example, see \cite[Section 5]{KPX14}.
\end{proof}

We reduce the proof of our main theorem to the generic case via the next proposition.

\begin{Prop}\label{reducetogeneric}
Assume that Theorem \ref{main} holds for any $L$ and for all of the de Rham generic $\pg$-modules over $\calR_L$.
Then, Theorem \ref{main} holds unconditionally.
\end{Prop}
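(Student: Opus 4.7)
The plan is to argue by induction on the rank $r$ of $M$, using the rank one case (Theorem \ref{main rank 1}) and the generic case (the hypothesis of Proposition \ref{reducetogeneric}) as the inputs. The inductive step will extract a saturated rank one $\pg$-submodule from any non-generic $M$, and then propagate Theorem \ref{main} through the resulting short exact sequence via Proposition \ref{exactness of Exp} together with the analogous compatibility of the de Rham $\varepsilon$-isomorphism. The base case $r=1$ is handled by Theorem \ref{main rank 1}, while if $M$ is generic we are done directly by assumption, so the only real work is in the non-generic step.

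More concretely, if $M$ is de Rham of rank $r \geq 2$ and not generic, then by Definition \ref{defigen} there exists a continuous character $\delta: \Gamma \to \overline{\Q}_p^\times$ with $\dcris(M(\delta)) \neq 0$ or $\dcris(M(\delta)^*) \neq 0$; after possibly dualizing and enlarging $L$, assume the former. A nonzero element of $\dcris(M(\delta))$ provides a nontrivial $\pg$-equivariant morphism $\calR_L \to M(\delta)[1/t]$. Multiplying by a sufficiently large power of $t$ to clear poles and saturating inside $M(\delta)$ yields a rank one saturated sub-$\pg$-module of $M(\delta)$, and untwisting produces a rank one saturated sub-$\pg$-module $M_1 \subseteq M$. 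Applying the left exact functor $\ddr$ to the exact sequence $0 \to M_1 \to M \to M/M_1 \to 0$, and using that $\dim_L \ddr(M) = r$ while $\dim_L \ddr(M_1) \leq 1$ and $\dim_L \ddr(M/M_1) \leq r-1$, a dimension count forces equalities and hence that both $M_1$ and $M/M_1$ are de Rham.

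Now Theorem \ref{main} holds for $M_1$ by Theorem \ref{main rank 1} and for $M/M_1$ by the inductive hypothesis on rank (the quotient being either generic, handled by assumption, or non-generic of strictly smaller rank, to which the induction applies). To transfer these commutative diagrams to $M$, we combine Proposition \ref{exactness of Exp}, which gives compatibility of $\Exp$ with the short exact sequence $0 \to M_1 \to M \to M/M_1 \to 0$, with the analogous multiplicativity of the de Rham $\varepsilon$-isomorphism $\varepsilon^{\dR}$ along such exact sequences. The main obstacle is precisely to verify this latter compatibility of $\varepsilon^{\dR}$: one must chase through the definitions of $\theta_{\dR}(M)$ (built from the Bloch-Kato fundamental sequences $C_1^\bullet(M)$ and $C_2^\bullet(M)$), the map $f_M$ (involving the local constant $\varepsilon_L(W(M))$ and the factor $t^{h_M}$), and the $\Gamma$-constant $\Gamma(M)$, and check that each term is multiplicative along short exact sequences of de Rham $\pg$-modules. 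This multiplicativity is essentially built into the construction using the determinant functor and the additivity of Hodge–Tate weights and Weil–Deligne representations, but its careful verification, compatibly with all the boundary maps appearing in $\theta_\dR$, is the technical heart of the reduction.
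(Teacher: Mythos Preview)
Your approach is essentially the same as the paper's: both argue by induction on the rank, use Theorem \ref{main rank 1} for the base case, extract a rank one saturated sub-$\pg$-module from a non-generic $M$ after twisting and enlarging $L$, and then propagate the commutativity through the resulting short exact sequence via Proposition \ref{exactness of Exp}.

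Two remarks. First, you correctly flag that, in addition to Proposition \ref{exactness of Exp}, one needs the multiplicativity of $\varepsilon^{\dR}_L$ along short exact sequences of de Rham $\pg$-modules; the paper uses this implicitly without citing it. However, you overstate its role by calling it ``the technical heart of the reduction'': this compatibility is already established in \cite{Nak17a} as part of the construction of $\varepsilon^{\dR}_L$ and should simply be cited, not reproved here. Second, your handling of the case $\dcris(M(\delta)^*)\neq 0$ by ``possibly dualizing'' is not the cleanest move, since it would require separately checking compatibility of $\Exp$ with Tate duality. It is simpler to observe that a nonzero $\phi$-eigenvector in $\dcris(M(\delta)^*)$ yields a rank one saturated sub-$\pg$-module of a twist of $M^*$, hence a rank one quotient of a twist of $M$, and the same inductive argument applies to the resulting short exact sequence $0\to M''\to M(\delta')\to (\text{rank one})\to 0$.
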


\begin{proof}
Let $M$ be a de Rham $\pg$-module over $\calR_L$ of rank $r$. 
We prove Theorem \ref{main} for $M$ and $N=\nrig(M)$ by induction on $r$.

The base case $r=1$ has been proved as Theorem \ref{main rank 1}.

Suppose that $r \geq 2$. We assume that Theorem \ref{main} holds for all the de Rham $\pg$-modules over $\calR_L$ of rank $\leqq r-1$. 
If $M$ is not generic, we have $\dcris(M(\delta)) \neq 0$ for some character $\delta: \Gamma \to \overline{\Q}_p^\times$.
Extending $L$ if necessary, we may assume that $\delta(\Gamma) \subseteq L^\times$ and that there is a nonzero $\phi$-eigen vector 
$x\otimes e_{\delta} \in \dcris(M(\delta))=(M(\delta)[1/t])^\Gamma$ with $x\in M[1/t]$.
Then, the submodule $\calR_{L}[1/t] x \subseteq M[1/t]$ is stable under $\pg$-actions.
Since $\calR_L$ is a B\'ezout domain, it turns that out that its saturation $M' \coloneqq \calR_{L}[1/t] x\cap M \subsetneq M(\delta)$ and the quotient $M(\delta)/M'$ are $\pg$-modules.
Therefore, by considering an exact sequence
\[ 0 \to M' \to M(\delta) \to M(\delta)/M'\to 0 \]
of de Rham $\pg$-modules, Lemma \ref{exactness of Exp} gives our assertion.
\end{proof}

\subsection{Proof for generic case}
\label{44}

We continue to use the same notation as in the previous section.

For a technical reason, we introduce another $\pg$-module $M^+$.
Let $h_1 \leq \dots \leq h_r$ be the Hodge-Tate weights of $M$.
Let $\alpha_1, \dots, \alpha_r \in \ddr(M)$ be a basis; taking along the filtration of $\ddr(M)$, we may assume that $t^{h_i} \alpha_i \in \ddif^+(M)$ for each $i$, and that $(t^{h_i}\alpha_i)_{1 \leq i \leq r}$ is a basis of $\ddif^+(M)$.
Then Theorem II.1.2 in \cite{Ber08} gives that there exists a unique $\pg$-module $M^+ \subseteq M$ such that
\[ \mathbf{D}^+_{\dif,n(M)}(M^+) = L_{n(M)}[[t]] \cdot t^{h_1 +1}\alpha_1 \oplus \qty( \bigoplus_{2 \leq i \leq r} L_{n(M)}[[t]] \cdot t^{h_i} \alpha_i ). \] 

Note that, since the big exponential maps are transitive by definition, $t^h N \subseteq M, M'$ for a sufficient large $h \in \Z_{>0}$, and $t^h N$ is obtained by the above procedure repeatedly starting from $M$, it suffices to prove the case $M'=M^+$.
Moreover, by Lemma \ref{Exp and twist universally}, we may assume that $\delta= \mathbf{1}$.

In summary, it is sufficient to prove that the diagram
\[
\xymatrixcolsep{4pc}
\xymatrix{
& 1_L \ar[dl]_-{\varepsilon_{\dR}(M)}\ar[dr]^-{\varepsilon_{\dR}(M^+)}\ar@{}[d]|{}&\\
\Delta_{L}(M)  \ar[rr]_-{ \Exp(M,M^+)_{\mathbf{1}} }&& \Delta_{L}(M^+).
}
\]
commutes.

\begin{Lemm}\label{Second half: Exp_2 and theta_dR}
The diagram
\[ 
\xymatrixcolsep{4pc}
\xymatrix{
\Det_L (\ddr(M)) \ar[r]^-{\times(-1)}  \ar@{<-}[d]_-{f_M}  & \Det_L (\ddr(M^+)) \ar@{<-}[d]^-{f_{M^+}} \\
\Delta_{L,2} (M) \ar[r]_-{\Exp_2(M,M^+)_\mathbf{1}} & \Delta_{L,2} (M^+),
}
\]
commutes.
\end{Lemm}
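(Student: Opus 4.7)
The plan is to unwind all four maps in the diagram on a generator of the one-dimensional $L$-space $\mathcal{L}_L(M)$ and reduce the required commutativity to a single scalar identity between the $\varepsilon$-constants attached to $M$ and $M^+$. No deep input is needed beyond what is already set up in the paper; the only subtle point lies in identifying those $\varepsilon$-constants.

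First I would compute the bottom arrow $\Exp_2(M,M^+)_{\mathbf{1}}$ explicitly. By Definition \ref{def_Exp2}, $\Exp_2(M)$ is the scalar extension of $\mathcal{L}_L(N)\to\mathcal{L}_L(M)$, $x\mapsto(-t)^{h_M}x$, and similarly for $M^+$. Hence $\Exp_2(M,M^+)=\Exp_2(M^+)\circ\Exp_2(M)^{-1}$ corresponds on the $\mathcal{L}_L$-level to multiplication by $(-t)^{h_{M^+}-h_M}$. The construction of $M^+$ replaces the Hodge--Tate weight $h_1$ by $h_1+1$ while keeping $h_2,\dots,h_r$ unchanged, so $h_{M^+}=h_M+1$. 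Specializing at $\delta=\mathbf{1}$, the map $\Exp_2(M,M^+)_{\mathbf{1}}\colon \mathcal{L}_L(M)\to\mathcal{L}_L(M^+)$ is therefore $y\mapsto -ty$.

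Next I would apply the formula for $f_M$ and $f_{M^+}$ recalled from \cite[Lemma 3.4]{Nak17a} to the element $y\in\mathcal{L}_L(M)$. The top route yields
\[
(\times(-1))\circ f_M(y) \;=\; -\varepsilon_L(W(M))^{-1}\,t^{-h_M}\,y,
\]
while the bottom route yields
\[
f_{M^+}\bigl(\Exp_2(M,M^+)_{\mathbf{1}}(y)\bigr) \;=\; f_{M^+}(-ty) \;=\; -\varepsilon_L(W(M^+))^{-1}\,t^{-h_{M^+}+1}\,y \;=\; -\varepsilon_L(W(M^+))^{-1}\,t^{-h_M}\,y.
\]
Thus the commutativity of the diagram is equivalent to the scalar identity $\varepsilon_L(W(M))=\varepsilon_L(W(M^+))$.

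The remaining step, which is the real content, is this scalar identity, and it is the only place where I expect to need to invoke anything substantive. It follows from the equality $W(M)=W(M^+)$ of Weil--Deligne representations, together with the fact that the $\varepsilon$-constant depends only on the Weil--Deligne representation and the fixed basis $(\zeta_{p^n})_n$. The equality $W(M)=W(M^+)$ is intrinsic to Berger's construction: the filtered $(\phi,N,G_{\Qp})$-module $\dpst$, and hence the attached Weil--Deligne representation, depend only on the underlying $p$-adic differential equation, so $W(M)=W(\nrig(M))=W(\nrig(M^+))=W(M^+)$. Once this is noted, the scalar identity above holds and the lemma follows.
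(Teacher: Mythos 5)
Your proof is correct and takes essentially the same route as the paper's: you compute $\Exp_2(M,M^+)_{\mathbf 1}(y)=-ty$ using $h_{M^+}=h_M+1$, unwind $f_M$ and $f_{M^+}$ via the explicit formula of \cite[Lemma 3.4]{Nak17a}, and reduce the diagram to the scalar identity $\varepsilon_L(W(M))=\varepsilon_L(W(M^+))$, which holds because the Weil--Deligne representation depends only on $\nrig(M)=\nrig(M^+)$. One wording caveat (which the paper's own proof shares): it is not the \emph{filtered} $(\phi,N,G_{\Qp})$-module that is unchanged in passing from $M$ to $M^+$ --- the filtration does change, as the Hodge--Tate weights differ --- but only the underlying $(\phi,N,G_{\Qp})$-module, i.e.\ the Weil--Deligne representation, which is exactly what the $\varepsilon$-constant sees; so the conclusion stands.
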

\begin{proof}

This follows from the direct calculation
\begin{align*}
f_{M^+}(\Exp_2(M,M^+)_\mathbf{1}(x)) &= f_{M^+}(-tx) \\
&= - \frac{1}{\varepsilon(M^+)}\frac{1}{t^{h_{M^+}}} \otimes \phi^n(tx) \\
&= - \frac{1}{\varepsilon(M^+)}\frac{t}{t^{h_{M^+}}} \otimes \phi^n(x) \\
&=- \frac{1}{\varepsilon(M^+)}\frac{1}{t^{h_{M}}} \otimes \phi^n(x) \\
&= - f_M(x),
\end{align*}
where $x \in \mathcal{L}_L(M)$ is any element and $n \geq \max \set{n(M),\,n(M^+)}$.
We note that the last equality follows from the fact that for two de Rham $\pg$-modules $D,D'$ with $D[1/t]=D'[1/t]$, the corresponding filtered $(\phi,\, N,\, G_{\Qp})$-modules are the same, so are the attached $\varepsilon$-constants.
\end{proof}

Thus, the main theorem is deduced from the following lemma.

\begin{Lemm}\label{First half: Exp_1 and theta}
The diagram
\[
\xymatrixcolsep{3pc}
\xymatrix{
& 1_{L}\ar[dl]_-{\Gamma(M) \theta(M)}\ar[dr]^-{\Gamma(M^+) \theta(M^+)}\ar@{}[d]|{}&\\
\Delta_{L,1}(M) \boxtimes_L \Det_L(\ddr(M))  \ar[rr]_-{ -\Exp_1(M,M^+)_\mathbf{1} \otimes_L \id} && \Delta_{L,1}(M^+)\boxtimes_L \Det_L( \ddr(M^+))
}
\]
commutes.

\end{Lemm}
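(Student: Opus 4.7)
The plan is to identify both sides of the diagram explicitly in the generic case and to verify they match via Theorem~\ref{keylemma}.

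First, I would exploit the generic hypothesis to simplify $\theta_{\dR}(M)$ and $\theta_{\dR}(M^+)$. By Lemmas~\ref{lemma:vanishing H2 when generic} and~\ref{lemma: generic treats easily}, we have $\Hpg^0 = \Hpg^2 = 0$, $\dim_L\Hpg^1 = r$, and $\dcris(M) = \dcris(M^*) = 0$ (and similarly for $M^+$). The Bloch--Kato fundamental sequences $C^\bullet_1, C^\bullet_2$ then collapse to the single short exact sequence
\[0 \to t(M) \xrightarrow{\exp_M} \Hpg^1(M) \xrightarrow{\exp^*_M} \ddr^0(M) \to 0,\]
and similarly for $M^+$. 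Combined with the filtration sequence $0 \to \ddr^0 \to \ddr \to t \to 0$, these express $\theta_{\dR}(M)$ (resp.\ $\theta_{\dR}(M^+)$) as an explicit determinant trivialisation of $\Delta_{L,1}\boxtimes\Det_L\ddr$ built from $\exp_M$ and $\exp^*_M$ (resp.\ their $M^+$-analogues).

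Next, I would identify $\Exp_1(M,M^+)_{\mathbf{1}}$. A direct comparison of the correction factors entering the definition of $\overline{\Exp}$ shows that, since $M^+$ has Hodge--Tate weights $(h_1+1, h_2, \dots, h_r)$, one has $\overline{\Exp}(M^+) = \nabla_{h_1+1}\cdot\overline{\Exp}(M)$ in $Q(\dist)$. Hence $\Exp_1(M,M^+)$ is induced on the determinant line of $\Hpsi^1$ by multiplication by $\nabla_{h_1+1}$, and specialising at $\delta=\mathbf{1}$ via the identification $\Hpsi^1\otimes_{f_{\mathbf{1}}}L \cong \Hpg^1$ of Lemma~\ref{lemma: generic treats easily} shows that $\Exp_1(M,M^+)_{\mathbf{1}}$ is determined by the scalar $\nabla_{h_1+1}(\mathbf{1}) = -(h_1+1)$ (or by its derivative $\frac{d\nabla_{h_1+1}}{\omega}(\mathbf{1})$ in the exceptional case $h_1=-1$) together with the natural map of Galois cohomologies induced by $M^+\subseteq M$.

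The main step is then to apply Theorem~\ref{keylemma} with $\lambda = \nabla_{h_1+1}$: parts~(1)--(2) express the compatibility of $\exp_M, \exp^*_M$ with $\exp_{M^+}, \exp^*_{M^+}$ via multiplication by $-(h_1+1)$, and part~(3) supplies the corresponding formula when $h_1=-1$ (where $\lambda(\mathbf{1})=0$). Inserting these compatibilities into the determinant trivialisations above and matching the resulting scalar against the ratio $\Gamma(M)/\Gamma(M^+) = \Gamma^*(h_1+1)/\Gamma^*(h_1)$ produces the asserted commutativity; the global sign ``$-$'' is produced by the Knudsen--Mumford convention when passing from a short exact sequence to its determinant trivialisation. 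The main obstacle is the case analysis: three subcases must be treated. In the ``fully generic'' range $h_1\notin\{0,-1\}$, parts~(1)--(2) apply cleanly and $\Gamma^*(h_1+1)/\Gamma^*(h_1)=h_1$ combines with $-(h_1+1)$ as required. In the subcase $h_1=0$, $\ddr^0(M^+)\subsetneq\ddr^0(M)$ genuinely alters the filtration piece of $\theta_{\dR}$, but $\Gamma^*(1)/\Gamma^*(0)=1$ compensates. In the exceptional subcase $h_1=-1$, part~(3) of Theorem~\ref{keylemma} replaces $\lambda(\mathbf{1})$ by $\frac{d\lambda}{\omega}(\mathbf{1})$ and must reproduce the ratio $\Gamma^*(0)/\Gamma^*(-1)=-1$. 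Verifying that all scalars, signs, and natural isomorphisms match on the nose across these subcases is the most delicate part of the argument.
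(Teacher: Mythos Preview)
Your overall strategy matches the paper's: reduce to the generic case, use Lemmas~\ref{lemma:vanishing H2 when generic}--\ref{lemma: generic treats easily} to collapse $\theta_\dR$ to the short exact sequence $0\to t(D)\xrightarrow{\exp}\Hpg^1(D)\xrightarrow{\exp^*}\ddr^0(D)\to 0$, identify $\Exp_1(M,M^+)_{\mathbf{1}}$ with a map induced by a single $\nabla$-operator, and then invoke Theorem~\ref{keylemma}. That part is fine.

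The concrete error is the index of the operator. You argue that ``a direct comparison of the correction factors'' gives $\overline{\Exp}(M^+)=\nabla_{h_1+1}\cdot\overline{\Exp}(M)$, but this ignores that the uncorrected maps $\wedge^r\overline{\Exp_{(h)}}(M)$ and $\wedge^r\overline{\Exp_{(h)}}(M^+)$ land in \emph{different} modules $\wedge^r\Hpsi^1(M)$ and $\wedge^r\Hpsi^1(M^+)$; comparing only the scalar corrections $c_M,c_{M^+}$ is not enough. The operator that actually realises $\Exp_1(M,M^+)$ is $\lambda=\nabla_{h_1}$, not $\nabla_{h_1+1}$. You can see this directly from the hypothesis of Theorem~\ref{keylemma}: one needs $\lambda(\ddifm^+(M))\subseteq\ddifm^+(M^+)$, and since $\nabla_k(t^{h_1}f(t)\alpha_1)=((h_1-k)f(t)+tf'(t))t^{h_1}\alpha_1$, this lies in $t^{h_1+1}L_n[[t]]\alpha_1$ for all $f$ precisely when $k=h_1$. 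With $k=h_1+1$ the constant term survives and the hypothesis fails, so Theorem~\ref{keylemma} cannot be applied as you propose.

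This shift propagates through the rest of your outline. The relevant scalar is $\nabla_{h_1}(\mathbf{1})=-h_1$, not $-(h_1+1)$; the exceptional case where part~(3) of Theorem~\ref{keylemma} is needed is $h_1=0$, not $h_1=-1$; and the remaining case split is $h_1<0$ versus $h_1>0$ (distinguished by whether $\alpha_1$ lies in $\ddr^0(M)$), exactly matching the ratio $\Gamma^*(h_1+1)/\Gamma^*(h_1)=h_1$ that you correctly computed. Your side remark about ``the natural map of Galois cohomologies induced by $M^+\subseteq M$'' also points in the wrong direction: the inclusion goes $M^+\hookrightarrow M$, whereas $\Exp_1(M,M^+)$ goes from $M$ to $M^+$ and is realised by $\times\nabla_{h_1}$. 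Once you replace $\nabla_{h_1+1}$ by $\nabla_{h_1}$ throughout, your sketch becomes essentially the paper's proof.
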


\begin{proof}
By Lemma \ref{reducetogeneric}, we may assume that $M$ is generic, which implies $M^+$ is also generic.
In the following, we use a letter $D$ to denote a general generic de Rham $\pg$-module.
We say $\mathcal{E} = \Exp_1(M,M^+)_\mathbf{1} \otimes_L \id$ for short.

We first give explicit descriptions of the isomorphisms appearing in the diagram.
By Lemma \ref{lemma: generic treats easily}, we have canonical quasi-isomorphisms $C^\bullet_\psi(D) \cong \Hpsi^1(D)[1]$ and $C^\bullet_{\phi,\gamma}(D) \cong \Hpg^1(D)[1]$.
The canonical base change isomorphism
\[ \Delta^\Iw_{L,1}(D) \otimes_{f_\mathbf{1}} L  \isom \Delta_{L,1}(D) \]
is thus the image under $[-1]$-functor of the isomorphism
\[  \Det_\dist \qty(\Hpsi^1(D)) \otimes_{f_1} L \isom \Det_L(\Hsg^1(D)): (\wedge^r x_i) \otimes 1 \mapsto \bigwedge^r \qty[\frac{p-1}{p} \log \chi(\gamma) p_\Delta (x_i),0]. \]
Therefore, the isomorphism $\Exp_1(M,M^+)_\mathbf{1}: \Delta_1(M) \xrightarrow{\sim} \Delta_1(M^+)$ is obtained as the image under $[-1]$-functor of the isomorphism
\[  \Det_L(\Hsg^1(M)) \xrightarrow{\sim} \Det_L(\Hsg^1(M^+)): \wedge^r x_i \mapsto \nabla_{h_1} (\wedge^r x_i) . \]
Next we consider $\theta(D)$.
Under the assumption of genericity, we have $\Hpg^i(D)=0$ for $i=0,2$ and $\dcris(D)=0$ again by Lemma \ref{lemma: generic treats easily}, so $\theta(D)$ is simply obtained via the trivializations of the exact sequences
\[ 0 \to t(D)_3 \xrightarrow{\exp_D} \Hpg^1(D)_{f,4} \to 0 ,\]
\[ 0 \to \Hpg^1(D)_{/f,1} \xrightarrow{\exp^*_D} \ddr^0(D)_2 \to 0,  \]
\[ 0 \to \ddr^0(D)_1 \to \ddr(D)_2 \to t(D)_3 \to 0 ,\]
\[ 0 \to \Hpg^1(D)_{f,1} \to \Hpg^1(D)_2 \to \Hpg^1(D)_{/f,3} \to 0 ,\]
where the index appearing at each space expresses its degree in the sequences and the last two sequences are canonical ones.
More explicitly, $\theta(D): 1_L \xrightarrow{\sim} (\Det_L (\Hsg^1(D)))^{-1} \boxtimes \Det_L (\ddr(D))$ is written as follows: if we put $d_0(D) = \dim_L(\ddr^0(D))$, then for any basis $(\beta_i)_{1 \leq i \leq r} \in \ddr(D)$ such that $(\beta_i)_{d_0(D)+1 \leq i \leq r}$ spans $\ddr^0(D)$, $\theta(D)^{-1}$ is described as
\begin{multline*}
\theta(D)^{-1}:\qty[ \exp_D(\overline{\beta_1})\wedge \dots \wedge \exp_D(\overline{\beta_{r-d_0(D)}}) \wedge \beta^{*_D}_{r-d_0(D)+1} \wedge \dots \wedge \beta^{*_D}_r \mapsto 1  ] \otimes (\wedge^r \beta_i) \\ \mapsto (-1)^{d_0(D)},
\end{multline*}
where $\beta^{*_D}_i$ are any lifts of $\beta_i$ with respect to $\exp^*_D$.

Using the above descriptions, we can say the asserted commutativity in more concrete form.
We put an element $X$ of $(\Det_L (\Hsg^1(D(M))))^{-1} \boxtimes \Det_L(\ddr(M))$ as
\begin{multline*}
\qty[\exp_M(\overline{\alpha_r})\wedge \dots \wedge \exp_M(\overline{\alpha_{d_0+1}}) \wedge \alpha^{*_M}_{1} \wedge \dots \wedge \alpha^{*_M}_{d_0} \mapsto 1] \otimes (\alpha_r \wedge \dots \wedge \alpha_{d_0+1} \wedge \alpha_1 \wedge \dots \wedge \alpha_{d_0}). 
\end{multline*}
Then, since $X$ is a basis by the definition of $(\alpha_i)_{1 \leq i \leq r}$, our claim deduces to show the commutativity at $X$, that is, the equality
\[ \Gamma(M^+)^{-1} \theta(M^+)^{-1} (- \mathcal{E} (X)) = \Gamma(M)^{-1} \theta(M)^{-1}(X), \]
or, furthermore, by the description of $\theta(M)$ above, the equality
\[ - \Gamma(M^+)^{-1} \theta(M^+)^{-1} (\mathcal{E}(X)) = (-1)^{d_0} \Gamma(M)^{-1}. \]

By our construction of $M^+$, we have $\nabla_{h_1}(\ddifm^+(M)) \subseteq \ddifm^+(M^+)$ for all $m \geq n(M)$.
Therefore, we can verify the above equality essentially by Lemma \ref{keylemma} as follows.

For the case $h_1 <0$, Lemma \ref{keylemma} (i) gives that $\nabla_{h_1}(\alpha^{*_M}_1) = -h_1 \alpha^{*_{M^+}}_{1}$,
so one obtains
\begin{multline*}
\mathcal{E}(X)=(-h_1)^{-1} \qty[\exp_{M^+}(\overline{\alpha_r})\wedge \dots \wedge \exp_{M^+}(\overline{\alpha_{d_0+1}}) \wedge \alpha^{*_{M^+}}_{1} \wedge \dots \wedge \alpha^{*_{M^+}}_{d_0} \mapsto 1]\\ \otimes  (\alpha_r \wedge \dots \wedge \alpha_{d_0+1} \wedge \alpha_1 \wedge \dots \wedge \alpha_{d_0}).
\end{multline*}
Since we have $\ddr^0(M^+)=\ddr^0(M)$, by the the description of $\theta(M^+)$ we obtain
\[\theta(M^+)^{-1} (\mathcal{E}(X)) = (-1)^{d_0(M^+)}(-h_1)^{-1} = (-1)^{d_0+1} h^{-1}_1 .\]
Thus, the desired equality is rewritten as
\[  h^{-1}_1 \Gamma(M^+)^{-1} = \Gamma(M)^{-1}, \]
which clearly holds since $\Gamma(D)^{-1}$ is the product of the Hodge-Tate weights of $D$ with multiplicity and by the relation $\Gamma^*(k+1)=k \cdot \Gamma^*(k)$ for any nonzero $k \in \Z $.

The case $h_1 >0$ follows similarly to the previous case $h_1 < 0$, by using Lemma \ref{keylemma} (ii) instead of Lemma \ref{keylemma} (i).

For the last case $h_1 = 0$, canceling $\Gamma(M) = \Gamma(M^+)$ from the equality our assertion becomes the following one:
\[  - \theta(M^+)^{-1} \qty(\mathcal{E}(X)) = (-1)^{d_0}.\]
Lemma \ref{keylemma} (iii) gives that $\nabla_0(\alpha^{*_{M}}_{1}) = \exp_{M^+}(\overline{\alpha_1})$, we obtain
\begin{multline*} 
\mathcal{E}(X) =
\qty[\exp_{M^+}(\overline{\alpha_r})\wedge \dots \wedge \exp_{M^+}(\overline{\alpha_{d_0+1}}) \wedge \exp_{M^+}(\overline{\alpha_1}) \wedge \alpha^{*_{M^+}}_{2} \dots \wedge \alpha^{*_{M^+}}_{d_0} \mapsto 1]\\ \otimes  (\alpha_r \wedge \dots \wedge \alpha_{d_0+1} \wedge \alpha_1 \wedge \dots \wedge \alpha_{d_0}).
\end{multline*}
In this case, the elements $\alpha_2, \dots, \alpha_{d_0}$ spans $\ddr^0(M^+)$, thus we can use the previous explicit description of $\theta(M^+)$ and obtain
\[  \theta(M^+)^{-1} \qty(\mathcal{E}(X)) = (-1)^{d_0(M^+)} = (-1)^{d_0-1},\]
which completes all the cases and finishes the proof.

\end{proof}

\subsection*{Acknowledgements}This work was supported by JSPS 
KAKENHI Grant Number 22K03231.

\end{document}